\documentclass{article}

\usepackage{amsmath,amsfonts,amssymb}
\usepackage{graphicx}
\usepackage{dsfont}
\usepackage{bbold}
\usepackage{amsthm}
\usepackage[utf8]{inputenc}
\usepackage{appendix}
\usepackage{geometry}
\geometry{tmargin=3cm,bmargin=3cm,lmargin=3.2cm,rmargin=3.2cm}

\bibliographystyle{alpha}

\newtheorem{thm}{Theorem}

\newtheorem{defn}[thm]{Definition}
\newtheorem{prp}{Proposition}
\newtheorem{claim}{Claim}
\newtheorem{lemma}{Lemma}

\title{\textbf{Application of Perron Trees to Geometric Maximal Operators}}

\author{Anthony Gauvan\footnote{Institut Mathématiques d'Orsay, Facultés des Sciences, 91400 Orsay}}

\begin{document}

\maketitle

\begin{abstract}
We characterize the $L^p(\mathbb{R}^2)$ boundeness of the geometric maximal operator $M_{a,b}$ associated to the basis $\mathcal{B}_{a,b}$ ($a,b > 0$) which is composed of rectangles $R$ whose eccentricity and orientation is of the form $$\left( e_R ,\omega_R \right) = \left( \frac{1}{n^a} , \frac{\pi}{4n^b} \right)$$ for some $n \in \mathbb{N}^*$. The proof involves \textit{generalized Perron trees}, as constructed in \cite{KATHRYN JAN}.
\end{abstract}

\section{Introduction}\label{S:1}

In \cite{BATEMAN}, Bateman has concluded the study of \textit{directional maximal operators in the plane}. In this text, we study \textit{geometric maximal operators} which are a natural generalisation of the directional operators. However, their study requires a precise understanding of the correlation between the \textit{eccentricity} and the \textit{orientation} of families of rectangles. In this text, we are able to prove sharp results concerning the $L^p(\mathbb{R}^2)$ range of boundedness of geometric maximal operators whose parameters vary in a polynomial way.

\subsection*{Definitions}

We work in the euclidean plane $\mathbb{R}^2$ ; if $A$ is a measurable subset of $\mathbb{R}^2$ we denote by $|A|$ its two dimensional Lebesgue measure. We denote by $A \sqcup B$ the union of $A$ and $B$ when $|A\cap B|=0$.

Denote by $\mathcal{R}$ the collection containing all rectangles of $\mathbb{R}^2$ ; for $R \in \mathcal{R}$ we define its \textit{orientation} as  the angle $\omega_R \in [0,\pi) $ that its longest side makes with the $Ox$-axis and its \textit{eccentricity} as the ratio $e_R \in (0,1]$ of its shortest side by its longest side.

For an arbitrary non empty family $\mathcal{B}$ contained in $\mathcal{R}$, we define the associated \textit{derivation basis} $\mathcal{B}^*$ by $$\mathcal{B}^* = \left\{ \Vec{t} + h R : \Vec{t} \in \mathbb{R}^2, h>0, R \in \mathcal{B} \right\}.$$ The derivation basis $\mathcal{B}^*$ is simply the smallest collection which is invariant by dilation and translation and that contains $\mathcal{B}$. Without loss of generality, we identify the derivation basis $\mathcal{B}^*$ and any of its generator $\mathcal{B}$.

Our object of interest will be the \textit{geometric maximal operator $M_\mathcal{B}$ generated by $\mathcal{B}$} which is defined as $$M_\mathcal{B}f(x) := \sup_{x \in R \in \mathcal{B}^*}  \frac{1}{|R|} \int_R |f|$$ for any $f \in L_{loc}^{1}(\mathbb{R}^2)$ and $x \in \mathbb{R}^2$. Observe that the upper bound is taken on elements of $\mathcal{B}^*$ that contain the point $x$. The definitions of $\mathcal{B}^*$ and $M_\mathcal{B}$ remain valid when we consider that $\mathcal{B}$ is an arbitrary family composed of open bounded convex sets. For example in this note, for technical reasons and without loss of generality, we will work at some point with triangles instead of rectangles.

For $p \in (1, \infty]$ we define as usual the operator norm $\|M_\mathcal{B}\|_p$ of $M_\mathcal{B}$ by $$ \|M_\mathcal{B}\|_p = \sup_{ \|f\|_p = 1} \|M_\mathcal{B}f\|_p.$$ If $\|M_\mathcal{B}\|_p < \infty$ we say that $M_\mathcal{B}$ is \textit{bounded} on $L^p(\mathbb{R}^2)$.  The boundedness of a maximal operator $M_\mathcal{B}$ is related to the geometry that the family $\mathcal{B}$ exhibits. 

\begin{defn}
We will say that the operator $M_\mathcal{B}$ is a \textbf{good operator} when it is bounded on $L^p(\mathbb{R}^2)$ for any $ p > 1$. On the other hand, we say that the operator $M_\mathcal{B}$ is a \textbf{bad operator} when it is unbounded on $L^p(\mathbb{R}^2)$ for any $1 < p < \infty$.
\end{defn}

On the $L^p(\mathbb{R}^2)$ scale, to be able to say that a operator $M_\mathcal{B}$ is good or bad is an optimal result. One can also be interested by the behavior near endpoint ($p = 1$ and $p=\infty$) but we won't consider this question here ; the reader might consult D'Aniello, Moonens and Rosenblatt \cite{DMR}, D'Aniello and Moonens  \cite{DM}-\cite{DM2} or Stokolos \cite{STOKOLOS}.

\subsection*{Directional maximal operators}

Researches have been done in the case where $\mathcal{B}$ is equal to $\mathcal{R}_\Omega :=  \left\{ R \in \mathcal{R} : \omega_R \in \Omega \right\}$ where $\Omega$ is an arbitrary set of directions in $[0,\pi)$. In other words, $\mathcal{R}_\Omega$ is the set of \textit{all} rectangles whose orientation belongs to $\Omega$. We say that $\mathcal{R}_\Omega$ is a \textit{directional basis} and to alleviate the notation we denote $$M_{\mathcal{R}_\Omega} := M_\Omega.$$ In the literature, the operator $M_\Omega$ is said to be a \textit{directional maximal operator}. The study of those operators goes back at least to Cordoba and Fefferman's article \cite{CORDOBAFEFFERMAN II} in which they use geometric techniques to show that if $\Omega = \left\{ \frac{\pi}{2^k} \right\}_{ k \geq 1}$ then $M_\Omega$ has weak-type $(2,2)$. A year later, using Fourier analysis techniques, Nagel, Stein and Wainger proved in \cite{NSW} that $M_\Omega$ is actually bounded on $L^p(\mathbb{R}^2)$ for any $p > 1$. In \cite{ALFONSECA}, Alfonseca has proved that if the set of direction $\Omega$ is a \textit{lacunary set of finite order} then the operator $M_\Omega$ is bounded on $L^p(\mathbb{R}^2)$ for any $p > 1$. Finally in \cite{BATEMAN}, Bateman proved the converse and so characterized the $L^p(\mathbb{R}^2)$-boundedness of directional operators. Precisely he proved the following theorem.

\begin{thm}[Bateman's Theorem]\label{ T : bateman }
Fix an arbitrary set of directions $\Omega \subset [0,\pi)$. The directional maximal operator $M_\Omega$ is either good or bad.
\end{thm}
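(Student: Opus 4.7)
The plan is to prove the dichotomy by showing that every $\Omega \subset [0,\pi)$ falls into exactly one of two mutually exclusive regimes, each of which forces the same behavior at \emph{every} $p \in (1,\infty)$. If $\Omega$ is \emph{lacunary of finite order}, in the sense defined inductively by Nagel--Stein--Wainger and extended by Alfonseca (order 1 being the classical lacunary case; order $N$ meaning the set of accumulation points is lacunary of order $N-1$), then Alfonseca's theorem directly yields $\|M_\Omega\|_p < \infty$ for every $p > 1$, so $M_\Omega$ is good. The task therefore reduces to establishing that if $\Omega$ is not lacunary of finite order, then $\|M_\Omega\|_p = \infty$ for every $p \in (1,\infty)$, that is, $M_\Omega$ is bad.

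For this bad regime I would first translate the failure of ``lacunary of finite order'' into a combinatorial statement about the existence of rich non-sticky substructures inside $\Omega$. Following Bateman, encode hierarchical clusterings of $\Omega$ by rooted trees of finite type: a depth-$N$ cover of $\Omega$ records an $N$-fold nested clustering of its directions. The key structural fact is that $\Omega$ is lacunary of finite order if and only if it admits such a cover of some fixed finite depth. Failing that, one obtains, for each $N \geq 1$, a finite subset $\Omega_N \subset \Omega$ of cardinality tending to infinity which resists clustering at every level.

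Given the $\Omega_N$, fix $p \in (1, \infty)$ and invoke the generalized Perron tree construction of \cite{KATHRYN JAN} to produce a pair of planar sets $(E_N, \widetilde{E}_N)$: the set $E_N$ is a tiled union of thin rectangles whose orientations lie in $\Omega_N$, while $\widetilde{E}_N$ is a prescribed union of translates of the same rectangles, satisfying
$$ |\widetilde{E}_N| \geq c > 0 \qquad \text{and} \qquad \frac{|\widetilde{E}_N|}{|E_N|} \longrightarrow \infty.$$
The geometric payoff of the Perron construction is that $M_\Omega(\mathbf{1}_{E_N}) \geq 1/2$ on $\widetilde{E}_N$, which upon testing the operator on $\mathbf{1}_{E_N}$ gives
$$ \|M_\Omega\|_p^p \;\geq\; \frac{|\widetilde{E}_N|}{2^{p}\,|E_N|} \;\longrightarrow\; \infty.$$
Since $p \in (1,\infty)$ was arbitrary, $M_\Omega$ is bad.

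The main obstacle lies in the first half of the second step: extracting, from the mere negation of ``lacunary of finite order'', combinatorial data rich enough at every depth $N$ to drive the Perron construction. This is the heart of Bateman's argument; it demands a careful inductive analysis of how clusters of directions can be nested inside $\Omega$, and it is only after this step that the Perron tree machinery of \cite{KATHRYN JAN} can be fed the orientations it needs.
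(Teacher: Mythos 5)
The paper does not actually prove this statement; Theorem~\ref{ T : bateman } is quoted from Bateman's paper \cite{BATEMAN} and used as a black box, so there is no internal proof to compare your sketch against. Judged on its own terms, your outline has the right overall architecture (positive direction via Alfonseca's theorem, negative direction via a Kakeya-type construction), but there are two genuine gaps.

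First, the dichotomy boundary is misstated. As the paper itself remarks just after the theorem, the correct characterization is that $\Omega$ is good if and only if it can be included in a \emph{finite union} of lacunary sets of finite order. A set such as $\{2^{-n}\}_{n\geq 1}\cup\{3^{-n}\}_{n\geq 1}$ need not itself be lacunary of any finite order in the Nagel--Stein--Wainger sense, yet it yields a good operator. Your split ``lacunary of finite order vs.\ not'' therefore places some good sets on the bad side; the right negation to work with is the one Bateman encodes with his tree language, namely the failure of \emph{any} finite-depth admissible tree cover, which is strictly weaker than failure of finite-order lacunarity.

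Second, feeding the complementary case into the Hare--Rönning generalized Perron tree machinery of \cite{KATHRYN JAN} does not follow. That construction requires a quantitative comparability hypothesis on the consecutive angular gaps (in this paper's notation, the finiteness of a ratio quantity such as $\tau_{\boldsymbol{t}}$), and ``$\Omega$ is not in a finite union of lacunary sets of finite order'' does not by itself produce a subsequence with that comparability. Bateman's actual negative argument avoids Perron trees altogether: from the failure of a finite tree cover he extracts, for each $N$, a subset of $\Omega$ that supports a Bateman--Katz-style \emph{probabilistic} Kakeya construction, and the randomness is essential precisely because no such comparability can be guaranteed. You flag this extraction step as ``the heart of Bateman's argument'' and leave it open; since it is exactly where the Perron-tree substitute breaks down, the sketch as written does not close into a proof.
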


Hence we know that a set of directions $\Omega$ always yields a directional operator $M_\Omega$ that is either good or bad. Merging the vocabulary, we use the following definition.

\begin{defn}
We say that a set of directions $\Omega$ is a good set of directions when $M_\Omega$ is good and that it is a  bad set of directions when $M_\Omega$ is bad.
\end{defn}

The notion of good/bad is perfectly understood for a set of directions $\Omega$ and the associated directional operator $M_\Omega$. To say it bluntly, $\Omega$ is a good set of directions if and only if it can be included in a finite union of lacunary sets of finite order. If this is not possible, then $\Omega$ is a bad set of directions ; see \cite{BATEMAN}. We now turn attention to maximal operator which are not directional.

\subsection*{Geometric maximal operators}

\textbf{In this text, we will focus on geometric maximal operator which are not directional}. We recall two results in the direction of Bateman's Theorem for an arbitrary basis $\mathcal{B}$ included in $\mathcal{R}$. The first one is a result in \cite{HAGELSTEINSTOKOLOS} where Hagelstein and Stokolos proved the following theorem.

\begin{thm}
Fix an arbitrary basis $\mathcal{B}$ in $\mathcal{R}$ and suppose that there exist constants $t_0 \in (0,1)$ and $C_0 > 1$ such that for any bounded measurable set $E \subset \mathbb{R}^2$ one has $$\left|\{M_\mathcal{B}\mathbb{1}_E > t_0 \} \right| \leq C_0\left|E \right|.$$ In this case there exists $p_0$ depending on $(t_0,C_0)$ such that for any $p > p_0$ we have $\| M_\mathcal{B}\|_p < \infty$.
\end{thm}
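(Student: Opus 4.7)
The overall strategy I would follow is a two-step bootstrap: first convert the single-level Tauberian hypothesis into a power-type distributional inequality valid at every level $\lambda \in (0,1]$, and then integrate this in the layer-cake formula to obtain $L^p$ boundedness for $p$ beyond a threshold $p_0(t_0,C_0)$.

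For the first step, the natural candidate is to iterate the superlevel operator
\[
T:E \mapsto \{M_{\mathcal{B}}\mathbb{1}_E > t_0\}.
\]
The hypothesis gives $|T(F)| \leq C_0|F|$ for every bounded measurable $F$, so by repeated application $|T^k(E)| \leq C_0^k|E|$. One then wants to identify $T^k(E)$ with (or contain) the low-level set $\{M_{\mathcal{B}}\mathbb{1}_E > t_0^k\}$. The heuristic is that if $x$ lies in the latter because of a rectangle $R \ni x$ with $|R\cap E|/|R| > t_0^k$, then $R$ is already "almost filled" by the increasing sequence of sets $T^j(E)$ (which automatically contain $E$ after using that $\mathcal{B}^*$ is stable under arbitrarily small dilations, so $E$ is essentially contained in every level set), which should propagate $x$ into $T^k(E)$. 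This is the main obstacle: because $t_0<1$ the exponents $t_0^{k+1}<t_0^k$ go the wrong way for a naive one-line induction, so one has to run a covering-type argument inside $R$ that uses the Tauberian condition itself as a substitute for a Vitali lemma (and, if $\mathcal{B}$ does not differentiate, stabilize by replacing $t_0$ with $t_0-\epsilon$ and letting $\epsilon \to 0$).

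Granting the multi-level bound $|\{M_{\mathcal{B}}\mathbb{1}_E > t_0^k\}| \leq C_0^k|E|$, a change of variable $\lambda = t_0^k$ gives the power-law estimate
\[
|\{M_{\mathcal{B}}\mathbb{1}_E > \lambda\}| \leq \frac{C}{\lambda^q}\,|E|,\qquad q := \frac{\log C_0}{\log(1/t_0)},
\]
for all $\lambda \in (0,1]$, with a constant depending only on $(t_0,C_0)$. Since $M_{\mathcal{B}}\mathbb{1}_E \leq 1$, integrating against $p\lambda^{p-1}\,d\lambda$ over $(0,1]$ yields, for every $p > q$, the restricted strong-type bound $\|M_{\mathcal{B}}\mathbb{1}_E\|_p^p \leq C(p,t_0,C_0)|E|$.

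To conclude, I would combine this restricted strong-type bound with the trivial $L^\infty$ estimate $\|M_{\mathcal{B}}\|_\infty \leq 1$ and invoke the Marcinkiewicz interpolation theorem in its sublinear, restricted form: this upgrades the bound on characteristic functions to a strong $(p,p)$ inequality for every $p > p_0$, where $p_0 = p_0(t_0,C_0)$ can be taken as any exponent strictly larger than $q$. The crux of the whole argument, and the only non-routine part, is the bootstrap from one level to all levels; once that polynomial distributional inequality is secured, the passage to $L^p$ is standard layer-cake plus interpolation.
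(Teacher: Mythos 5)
The paper does not prove this result: it is quoted verbatim from Hagelstein--Stokolos \cite{HAGELSTEINSTOKOLOS} as a known theorem, so there is no ``paper's own proof'' to compare against. Evaluating your argument on its own, the outline (single-level Tauberian $\Rightarrow$ power-law distributional inequality $\Rightarrow$ restricted strong type $\Rightarrow$ interpolation) is the right general shape, and the last two steps (layer-cake and Marcinkiewicz restricted interpolation against the trivial $L^\infty$ bound) are routine once the distributional estimate is secured.

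The genuine gap is precisely in the step you flag as ``the main obstacle'' and then wave past. The inclusion $\{M_{\mathcal{B}}\mathbb{1}_E > t_0^k\}\subset T^k(E)$ does not follow from any one-line density or iteration argument, and in the form you state it it is too strong. Concretely: if $R\ni x$ has $|R\cap E|/|R|>t_0^2$, the naive density-theorem observation $E\subset T(E)$ (a.e.) only gives $|R\cap T(E)|/|R|>t_0^2$, not $>t_0$; and a Calder\'on--Zygmund stopping argument inside $R$ (subdividing $R$ into $m$ homothetic pieces and stopping when the $E$-density exceeds $t_0$) yields $|R\cap T(E)|\geq |E\cap R|/(m t_0)$, hence only $\{M_{\mathcal{B}}\mathbb{1}_E>\alpha\}\subset\{M_{\mathcal{B}}\mathbb{1}_{T(E)}>\alpha/(m t_0)\}$. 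Iterating that gives $\{M_{\mathcal{B}}\mathbb{1}_E > (m t_0)^{k-1}t_0\}\subset T^k(E)$, not the $t_0^k$ you claim, and it only closes when $m t_0<1$ (so even this route needs an extra reduction if $t_0$ is not small). In particular your exponent $q=\log C_0/\log(1/t_0)$ is not the one this argument produces; one gets $q=\log C_0/\log(1/(m t_0))$ or worse. Your phrase ``run a covering-type argument inside $R$ that uses the Tauberian condition itself as a substitute for a Vitali lemma'' names the missing lemma without proving it; as written, the proposal does not establish the multi-level bound, and that bound is the entire content of the theorem. To repair it you would need to (i) prove the quantitative self-improvement $\{M_{\mathcal{B}}\mathbb{1}_E>\alpha\}\subset\{M_{\mathcal{B}}\mathbb{1}_{T(E)}>c\alpha\}$ with an explicit $c=c(t_0)>t_0$ valid for all admissible $\alpha$, (ii) handle the regime where $t_0$ is not small, and (iii) carry the resulting constant honestly into the definition of $p_0$.
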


In \cite{GAUVAN}, we have shown that one can associate to any basis $\mathcal{B}$ included in $\mathcal{R}$ a geometric quantity denoted by $\lambda_{[\mathcal{B}]} \in \mathbb{N} \cup \{ \infty \}$ that we call the \textit{analytic split} of the family $\mathcal{B}$. We insist on the fact that the analytic split \textit{is not} defined by abstract means but really concrete ; in certain settings one can easily compute it. The analytic split of a basis allows us to controle the $p$-norm of the associated geometric maximal operator.

\begin{thm}
For any basis $\mathcal{B}$ in $\mathcal{R}$ and any $1 < p < \infty$ we have $$\log( \lambda_{[\mathcal{B}]}) \lesssim_p \|M_\mathcal{B}\|_p^p.$$
\end{thm}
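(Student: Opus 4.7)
The approach is to lower bound $\|M_\mathcal{B}\|_p^p$ by exhibiting, from the geometric data witnessing $\lambda_{[\mathcal{B}]}$, a test function whose maximal enlargement can be quantified by a generalized Perron tree. Set $N = \lambda_{[\mathcal{B}]}$ and assume $N < \infty$; the case $N = \infty$ follows by letting $N \to \infty$ once the finite bound is established, forcing $\|M_\mathcal{B}\|_p = \infty$.

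First I would unpack the definition of the analytic split from \cite{GAUVAN}: by construction, $\lambda_{[\mathcal{B}]} \geq N$ produces a finite sub-collection $\{R_1, \dots, R_N\} \subset \mathcal{B}^*$ whose orientations and eccentricities realize the combinatorial pattern required as input by the generalized Perron tree machinery of \cite{KATHRYN JAN}. This step is largely a translation from \cite{GAUVAN}.

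Next I would feed this sub-collection into the generalized Perron tree construction. Using the freedom to replace rectangles by comparable inscribed triangles (legitimate because $M_\mathcal{B}$ extends to open bounded convex bodies with only constant-factor losses), the construction produces two sets: a \emph{body} set $E$, obtained as an essentially disjoint union of triangles cut from the $R_i$, and a \emph{shadow} set $\widetilde E$ obtained by translating each triangle along its major axis. The two properties of interest, both established in \cite{KATHRYN JAN}, are that $\widetilde E \subset \{M_\mathcal{B} \mathbb{1}_E \geq c\}$ for some absolute $c > 0$ (each point of $\widetilde E$ lies in an element of $\mathcal{B}^*$ whose intersection with $E$ is a constant fraction of its area), together with the quantitative gain $|\widetilde E| \gtrsim \log(N) \cdot |E|$. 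The conclusion is then immediate: testing $M_\mathcal{B}$ on $f = \mathbb{1}_E$ gives
$$ \|M_\mathcal{B}\|_p^p \geq \frac{\|M_\mathcal{B} \mathbb{1}_E\|_p^p}{\|\mathbb{1}_E\|_p^p} \geq c^p \frac{|\widetilde E|}{|E|} \gtrsim_p \log(N). $$

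The main obstacle is reconciling the abstract definition of $\lambda_{[\mathcal{B}]}$ with the concrete combinatorial hypotheses of the generalized Perron tree. Classical Perron trees require directions in arithmetic progression at fixed eccentricity; the generalization in \cite{KATHRYN JAN} tolerates varying eccentricities, but only for sub-families with a specific nesting structure. Extracting such a sub-family from an arbitrary $\mathcal{B}$ with $\lambda_{[\mathcal{B}]} \geq N$ — in effect showing that the analytic split \emph{is} the maximal depth of a Perron tree living inside $\mathcal{B}$ — is where most of the technical work sits. Everything else is a standard transcription of the $L^p$-testing argument that converts a Perron tree into a quantitative lower bound on the operator norm.
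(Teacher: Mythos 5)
The statement you are proving is one the paper merely \emph{quotes}: it is recalled from the author's earlier preprint \cite{GAUVAN}, where both the analytic split $\lambda_{[\mathcal{B}]}$ and the inequality $\log(\lambda_{[\mathcal{B}]}) \lesssim_p \|M_\mathcal{B}\|_p^p$ are established. No proof appears in the present paper, so there is no in-text argument to compare against; what follows assesses the plausibility of your sketch on its own terms.

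The overall shape of your argument --- exhibit a test set $E$ with $\left\{ M_\mathcal{B}\mathbb{1}_E \geq c \right\} \supset \widetilde E$ and $|\widetilde E| \gtrsim \log(N)\,|E|$, then divide --- is the standard Kakeya-to-$L^p$ transcription and is exactly what the final display in your sketch encodes; that step is correct. Two issues remain. First, you have swapped the roles of the two sets when describing them: the test set $E$ in a Perron tree argument is the \emph{heavily overlapping} union of translated thin triangles (this is precisely what makes $|E|$ small), while the shadow set $\widetilde E$ is the \emph{essentially disjoint} union of the reflected pieces $(\vec{A}_k+\vec{s}_k)+\tfrac{1}{2}\Delta_k$, which is why $|\widetilde E|\simeq\sum_k|\Delta_k|$ can dominate $|E|$; calling $E$ ``an essentially disjoint union of triangles'' and $\widetilde E$ ``obtained by translating'' inverts that (and a translate of an essentially disjoint family cannot gain a $\log N$ factor in measure). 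Second, and more seriously, the step you flag yourself --- extracting from $\lambda_{[\mathcal{B}]} \geq N$ a subfamily that actually feeds the Hare--R\"onning construction --- is carrying the entire content of the theorem. It is far from clear that the analytic split is ``the maximal depth of a Perron tree living inside $\mathcal{B}$''; nothing in the present paper supports that identification, and the generalized Perron tree requires a $\tau_{\boldsymbol{t}}<\infty$-type comparability condition that an arbitrary basis with large analytic split need not satisfy directly. Without unpacking the actual definition of $\lambda_{[\mathcal{B}]}$ from \cite{GAUVAN}, this gap cannot be closed, so the proposal remains a reasonable heuristic outline rather than a proof.
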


This Theorem implies that any basis $\mathcal{B}$ whose analytic split is infinite yields bad maximal operators $M_\mathcal{B}$. Moreover, it is easy to exhibit a lot of bases $\mathcal{B}$ whose analytic split is infinite.

\begin{thm}
If $\lambda_{ [\mathcal{B}]} = \infty$ then $M_\mathcal{B}$ is bad.
\end{thm}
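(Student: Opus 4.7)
The plan is to derive this statement as an immediate corollary of the previous theorem, namely
$$\log(\lambda_{[\mathcal{B}]}) \lesssim_p \|M_\mathcal{B}\|_p^p,$$
valid for every $1 < p < \infty$ with an implicit constant depending only on $p$. Fixing an arbitrary $p$ in this range, I would simply observe that the hypothesis $\lambda_{[\mathcal{B}]} = \infty$ makes the left-hand side infinite, so the finite $p$-dependent constant cannot control it, forcing $\|M_\mathcal{B}\|_p = \infty$. Since $p$ was chosen arbitrarily in $(1,\infty)$, the operator $M_\mathcal{B}$ is unbounded on every $L^p(\mathbb{R}^2)$ with $1 < p < \infty$, which is exactly the definition of a \textbf{bad operator} recalled earlier.

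The only verification that deserves explicit mention is that the range of $p$ appearing in the quoted inequality matches the range appearing in the definition of a bad operator; both are $1 < p < \infty$, so no gap arises at either endpoint. In particular, I do not need to say anything about the behavior at $p = 1$ or $p = \infty$, which lie outside the scope of the definition.

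There is no real obstacle in the corollary itself: the entire substance is concentrated in the preceding theorem, where the analytic split $\lambda_{[\mathcal{B}]}$ is converted into a quantitative lower bound on $\|M_\mathcal{B}\|_p$ via the (generalized) Perron tree construction. The present statement is merely the qualitative extraction of that quantitative inequality, and once one grants the previous theorem it is a contrapositive-style one-line deduction. So the proof I would write would consist of at most two sentences, essentially citing the previous theorem and letting $\lambda_{[\mathcal{B}]} \to \infty$.
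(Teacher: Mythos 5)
Your deduction is exactly the argument the paper has in mind: immediately before this statement the paper writes that the quantitative inequality $\log(\lambda_{[\mathcal{B}]}) \lesssim_p \|M_\mathcal{B}\|_p^p$ ``implies that any basis $\mathcal{B}$ whose analytic split is infinite yields bad maximal operators,'' which is precisely your one-line contrapositive. Your remark that the $p$-ranges in the inequality and in the definition of a bad operator coincide is a correct and appropriate sanity check.
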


We are going to state our results now.

\subsection*{Results}\label{ S : results }

\textbf{As said earlier, we consider a family of geometric maximal operators which are \textit{not} directional maximal operators. Moreover we will always work with bases $\mathcal{B}$ such that its associated set of directions $$\Omega_{\mathcal{B}} := \{ \omega_R : R \in \mathcal{B} \}$$ is a bad set of directions.} Indeed if $\Omega_\mathcal{B}$ is a good set of directions using the trivial estimate $M_\mathcal{B} \leq M_{\Omega_{\mathcal{B}}}$ we know that $M_\mathcal{B}$ is also a good operator.

Fix two real positive numbers $a,b > 0$ and denote by $\mathcal{B}_{a,b}$ the basis of rectangles $R$ whose eccentricity and orientation are of the form $$\left( e_R ,\omega_R \right) = \left( \frac{1}{n^a} , \frac{\pi}{4n^b} \right)$$ for some $n \in \mathbb{N}^*$. We denote by $M_{a,b}$ the geometric maximal operator associated to the basis $\mathcal{B}_{a,b}$. We prove the following theorem.

\begin{thm}\label{THMA}
If $a < b$ then $M_{a,b}$ is a good operator. If not then $M_{a,b}$ is a bad operator. 
\end{thm}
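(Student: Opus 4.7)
The theorem splits into two implications with quite different arguments.

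\textbf{The good case ($a<b$).} The plan is to dominate $M_{a,b}$ pointwise by a lacunary directional maximal operator. Partition $\mathbb{N}^*$ dyadically, $I_k = \{2^k, \ldots, 2^{k+1}-1\}$. For $n \in I_k$ one has eccentricity $e_n \asymp 2^{-ka}$ while the angular spread $\theta_{2^k} - \theta_{2^{k+1}}$ is of order $2^{-kb}$; the ratio of spread to eccentricity is $\asymp 2^{k(a-b)}$, which is uniformly bounded in $k$ thanks to the assumption $a<b$. Consequently, thickening any rectangle of $\mathcal{B}_{a,b}^*$ with block index $k$ by a bounded factor along its short direction yields a rectangle at the representative orientation $\omega_k := \theta_{2^k}$ that contains it. This produces an auxiliary basis $\widetilde{\mathcal{B}}$ whose direction set $\Omega = \{\omega_k\}_{k\geq 0}$ satisfies $\omega_{k+1}/\omega_k = 2^{-b}$ and is therefore lacunary. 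The pointwise bound $M_{a,b} \lesssim M_\Omega$, combined with Nagel--Stein--Wainger \cite{NSW}, then gives that $M_{a,b}$ is good.

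\textbf{The bad case ($a \geq b$).} The plan is to violate the weak-type hypothesis of the Hagelstein--Stokolos theorem quoted above, by constructing, for each $N$, a configuration of elements of $\mathcal{B}_{a,b}^*$ whose Perron ratio $|\bigcup \widetilde{R}|/|\bigcup R|$ is unbounded in $N$. I would apply the generalized Perron tree construction of \cite{KATHRYN JAN}, taking as data the triangles associated to a dyadic block of indices, with orientations $\theta_n = \pi/(4n^b)$ and apex angles matching the eccentricities $1/n^a$. The hypothesis $a \geq b$ is precisely the regime in which the compressibility estimate driving the construction is available: the apex angles at scale $n$ are no larger than the angular gap between consecutive $\theta_n$'s at the same scale, so the tree nests properly. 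The critical case $a=b$ is handled by the sharp logarithmic bookkeeping built into the generalized construction of \cite{KATHRYN JAN}.

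\textbf{Main obstacle.} The dyadic covering for the good case is routine once the scales are set up. The substantive work lies in the bad direction: adapting the generalized Perron tree of \cite{KATHRYN JAN} to the polynomially-parametrized family $\mathcal{B}_{a,b}$ and, in particular, verifying the compressibility estimate at the critical boundary $a=b$. The passage from rectangles to triangles and back, flagged in the introduction, will be used to set up the Perron construction in its cleanest form.
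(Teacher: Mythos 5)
Your argument for the good case ($a<b$) is correct but takes a different, heavier route than the paper. You dominate $M_{a,b}$ by a lacunary directional maximal operator and invoke Nagel--Stein--Wainger. The paper instead observes that when $\omega_{R_n}\lesssim e_{R_n}$ (which holds for $a\le b$ since $\pi/(4n^b)\le(\pi/4)n^{-a}$), every rectangle of $\mathcal{B}_{a,b}$ can be rotated to the horizontal at a bounded multiplicative cost in area, so $M_{a,b}\lesssim M_{\{0\}}$, the strong maximal operator. Your version also works, but the axis-parallel domination is simpler and avoids Fourier-analytic input.

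The bad case is where the proposal has a real gap. You assert that for $a\ge b$ ``the apex angles at scale $n$ are no larger than the angular gap between consecutive $\theta_n$'s at the same scale,'' but the gap between consecutive orientations is
\[
\left|\frac{\pi}{4n^b}-\frac{\pi}{4(n+1)^b}\right|\;\asymp\;\frac{1}{n^{b+1}},
\]
not $n^{-b}$, while the apex angles (eccentricities) are $n^{-a}$. The nesting/compressibility condition $e_n\lesssim|\theta_n-\theta_{n+1}|$ therefore requires $a\ge b+1$, not $a\ge b$. For $b<a<b+1$ the triangles are \emph{too fat} relative to the consecutive angular gaps, and the Perron tree of \cite{KATHRYN JAN} cannot be applied directly; the reference to ``sharp logarithmic bookkeeping'' at the boundary does not bridge this regime. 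The paper handles it by a short rescaling trick you miss: since $n\mapsto n^\ell$ maps $\mathbb{N}^*$ into itself, one has $\mathcal{B}_{\ell a,\ell b}\subset\mathcal{B}_{a,b}$ and hence $M_{\ell a,\ell b}\le M_{a,b}$; choosing $\ell_0>1/(a-b)$ forces $\ell_0 a\ge\ell_0 b+1$, placing the sub-basis in the regime where Theorem~\ref{T : main 2} applies, and unboundedness transfers back to $M_{a,b}$. Without this observation (or something replacing it), the argument for $b<a<b+1$ does not close.
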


We shall prove Theorem \ref{THMA} thanks to Theorems \ref{T : main 1} and \ref{T : main 2}. Denote by ${\boldsymbol{t}} =  \left\{ t_k \right\}_{k \geq 1} \subset [0,\frac{\pi}{4}]$ a sequence decreasing to $0$ and by ${\boldsymbol{e}} = \left\{ e_k \right\}_{k \geq 1} \subset (0,1]$ any positive sequence. One should consider the sequence $\boldsymbol{t}$ as a sequence of angles (or tangent of angles) that forms a bad set of directions whereas the sequence $\boldsymbol{e}$ stands for an arbitrary sequence of eccentricity. For $k \geq 1$ consider a rectangle $$R_k := R_k(\boldsymbol{e},\boldsymbol{t})$$ whose orientation and eccentricity are defined by $\left( e_{R_k} , \omega_{ R_k } \right) = \left( e_k , t_k \right)$. Define then the basis $$\mathcal{B} = \mathcal{B}(\boldsymbol{t},\boldsymbol{e})$$ as the one generated by the rectangles $\left\{ R_k \right\}_{k \geq 1}$. Our first result reads as follow.

\begin{thm}\label{T : main 1}
Suppose there is a constant $C > 0$ such that for any $k \geq 1$, $t_k\leq  C e_k$. In this case the operator $M_\mathcal{B}$ is a good operator.
\end{thm}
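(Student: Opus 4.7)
The plan is to dominate $M_\mathcal{B}$ pointwise by a constant multiple of the strong maximal operator $M_S$, i.e.\ the maximal function taken over all axis-parallel rectangles. Once this pointwise bound is in hand, the classical Jessen--Marcinkiewicz--Zygmund theorem, which gives $\|M_S\|_p<\infty$ for every $p>1$, immediately yields that $M_\mathcal{B}$ is a good operator.

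To achieve the pointwise domination I would first record the elementary geometric identity: if $R$ is a rectangle with longest side of length $L$, eccentricity $e\in(0,1]$ and orientation $\theta\in[0,\pi/4]$, then its axis-parallel bounding box $R'$ satisfies
\[
\frac{|R'|}{|R|}=1+\left(\tfrac{1}{e}+e\right)\sin\theta\cos\theta.
\]
Applying this to each generator $R_k$ with $(e,\theta)=(e_k,t_k)$ and using both $\sin\theta\cos\theta\le\theta$ on $[0,\pi/4]$ and the hypothesis $t_k\le Ce_k$, the ratio $|R_k'|/|R_k|$ is bounded by $1+2C$, uniformly in $k$. Since translations and positive dilations preserve the area ratio of a rectangle and its bounding box, the same estimate $|R'|\le(1+2C)|R|$ holds for every $R\in\mathcal{B}^*$.

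The pointwise comparison is then immediate: for any point $x$ lying in some $R\in\mathcal{B}^*$ one has $x\in R'$ as well, so
\[
\frac{1}{|R|}\int_R|f|\le\frac{|R'|}{|R|}\cdot\frac{1}{|R'|}\int_{R'}|f|\le (1+2C)\,M_Sf(x).
\]
Taking the supremum over all $R\in\mathcal{B}^*$ containing $x$ produces $M_\mathcal{B}f(x)\le(1+2C)M_Sf(x)$, from which the theorem follows.

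There is no serious obstacle in this argument; the hypothesis $t_k\le Ce_k$ has been designed precisely so that the angular information can be absorbed into a bounded loss of area. Morally, once the angular aperture at scale $e_k$ is controlled by $e_k$ itself, each $R_k$ is comparable to its horizontal enclosure and the set of directions $\Omega_\mathcal{B}$---however bad---becomes invisible to $M_\mathcal{B}$.
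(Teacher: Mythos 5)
Your argument is correct and follows essentially the same route as the paper: enclose each tilted rectangle in an axis-parallel one whose area exceeds it by at most a bounded factor (your explicit constant $1+2C$ versus the paper's $8(1+C)$), then dominate $M_\mathcal{B}$ pointwise by the strong maximal operator, whose $L^p$-boundedness for $p>1$ finishes the proof. The only difference is that you make the ``easy geometric observation'' of the paper explicit via the bounding-box area formula, which is a nice clarification but not a different method.
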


We define now the following quantity associated to the sequence $\boldsymbol{t}$  $$ \tau_{\boldsymbol{t}} := \sup_{k \geq 0, l \leq k} \left( \frac{t_{k+2l} - t_{k+l}}{t_{k+l} - t_{k}} + \frac{t_{k+l} - t_{k}}{t_{k+2l} - t_{k+l}} \right) \in (0,\infty].$$ This quantity yields information on the goodness/badness of the set $\{ t_k \}_{k\geq 1}$ seen as a set of directions. Indeed if $\tau_{\boldsymbol{t}}$ is finite then the set of directions $\Omega = \{ t_k \}_{k \geq 1}$ forms a bad set of directions. In some sense, this quantity indicates to which point the sequence $\boldsymbol{t}$ is \textit{uniformly distributed near} $0$. For example, the sequence $\boldsymbol{t} = \{ \frac{1}{k} \}$ look likes a uniform distribution near $0$ and we have $\tau_{\boldsymbol{t}} < \infty$. On the other hand the sequence $\boldsymbol{t} = \{ \frac{1}{2^k} \}_{k \geq 1}$ converges rapidly to $0$ and we have $\tau_{\boldsymbol{t}} = \infty$. The second result reads as follow. 

\begin{thm}\label{T : main 2}
Suppose that $\tau_{\boldsymbol{t}} < \infty$ and also that there is a constant ${\mu_0} > 0$ such that for any $k \geq 1$ we have $ e_k < {\mu_0} |t_k-t_{k+1}|$. In this case, the maximal operator $M_\mathcal{B}$ is a bad operator.
\end{thm}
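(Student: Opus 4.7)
\emph{Strategy.} The plan is to produce, for every $\eta>0$, a measurable set $E\subset\mathbb{R}^2$ of finite measure together with a universal constant $c>0$ (independent of $\eta$) such that
$$\bigl|\{M_\mathcal{B}\mathbb{1}_E>c\}\bigr|\;\geq\;\eta^{-1}|E|.$$
If $M_\mathcal{B}$ were bounded on some $L^p$ with $1<p<\infty$, Chebyshev's inequality would give the uniform bound $|\{M_\mathcal{B}\mathbb{1}_E>c\}|\leq(\|M_\mathcal{B}\|_p/c)^p|E|$, contradicting $\eta\to 0$. Hence $M_\mathcal{B}$ would be bad on every $L^p$, $1<p<\infty$.

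\emph{Building $E$ via the generalized Perron tree.} I would invoke the construction of Hare--Rönning \cite{KATHRYN JAN}. The finiteness of $\tau_{\boldsymbol{t}}$ is precisely the "uniformly distributed" condition on which their construction relies. Applying it to the angles $\{t_k\}_{k\geq 1}$ yields, for every $\varepsilon>0$, a finite family of (scaled, translated) triangles $\{T_k\}_{k\in K}$ with orientations $t_k$, apex angles $\asymp|t_k-t_{k+1}|$, and \emph{shadows} $T_k^*$ (axis-translates of $T_k$ by roughly their own height) satisfying
$$\Bigl|{\textstyle\bigcup_{k\in K}}T_k\Bigr|\;\leq\;\varepsilon\;\Bigl|{\textstyle\bigsqcup_{k\in K}}T_k^*\Bigr|.$$
One then sets $E:=\bigcup_{k}T_k$ and $E^*:=\bigsqcup_{k}T_k^*$.

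\emph{Passing from triangles to $\mathcal{B}^*$.} The second hypothesis enters when replacing Perron triangles by rectangles from $\mathcal{B}^*$. For each $k\in K$, inscribe the rectangle $\tilde R_k\in\mathcal{B}^*$ of orientation $t_k$ and eccentricity $e_k$ whose long axis coincides with that of $T_k$ and whose length reaches from the apex of $T_k$ all the way across $T_k^*$. Its width is $\asymp e_k\cdot\mathrm{height}(T_k)$, and $e_k<\mu_0|t_k-t_{k+1}|$ makes this at most a constant multiple of the base of $T_k$, so $\tilde R_k$ sits inside a tube that crosses a fixed angular fraction of the Perron triangle. A short planar computation then gives $|\tilde R_k\cap T_k|\geq c\,|\tilde R_k|$ for some $c=c(\mu_0)>0$. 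Since every point of $T_k^*$ lies in $\tilde R_k\in\mathcal{B}^*$, one concludes $M_\mathcal{B}\mathbb{1}_E\geq c$ throughout $E^*$, whence $|\{M_\mathcal{B}\mathbb{1}_E>c/2\}|\geq\varepsilon^{-1}|E|$.

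\emph{Main obstacle.} The conceptual scheme is clear, but the real work lies in aligning the output of Hare--Rönning's construction with the rectangle basis at hand. In particular one must check that after the (triadic) sub-selection their procedure performs, the apex angle of every surviving Perron triangle is still controlled by $|t_k-t_{k+1}|$ rather than some larger triadic gap $|t_k-t_{k+\ell}|$; this is precisely why the quantity $\tau_{\boldsymbol{t}}$ involving $t_k,t_{k+\ell},t_{k+2\ell}$ is the right hypothesis, since it comparess neighbouring triadic gaps. Simultaneously one has to verify that the shadows $T_k^*$ are essentially disjoint so that $|E^*|$ truly scales as $\varepsilon^{-1}|E|$. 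These two compatibility checks absorb the bulk of the technical effort; once they are in place, the eccentricity condition $e_k<\mu_0|t_k-t_{k+1}|$ ensures that the final replacement of triangles by rectangles of $\mathcal{B}^*$ costs only a fixed multiplicative constant.
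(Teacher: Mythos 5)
Your proposal follows essentially the same route as the paper: build a generalized Perron tree from auxiliary thin triangles with apex angles $\asymp|t_k-t_{k+1}|$, show the translated ``shadows'' are essentially disjoint and carry most of the mass, and use the hypothesis $e_k<\mu_0|t_k-t_{k+1}|$ to guarantee that an element of $\mathcal{B}^*$ fills a fixed fraction $\eta(\mu_0)$ of each auxiliary triangle (the paper isolates this as its Lemma \ref{L : estimation II}). The two ``compatibility checks'' you flag are exactly what the paper's Proposition \ref{ P : gpt } supplies: the disjointness of the half-triangles $(\Vec{A}_k+\Vec{s}_k)+\frac12\Delta_k$ is stated there, and no angular sub-selection is needed because the Hare--R\"onning construction stacks \emph{all} $\Delta_k$ for $k\in\{N+1,\dots,N+2^n\}$ rather than merging neighbouring gaps, so the apex of each surviving triangle is always $|t_k-t_{k+1}|$.
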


Before going into the proofs, let us expose general remarks about geometric maximal operators that will be useful.

\subsection*{How can we prove that $M_\mathcal{B}$ is bad ?}

To prove that an operator $M_\mathcal{B}$ is bad, the idea is to create an exceptional geometric set adapted to the basis $\mathcal{B}$ ; precisely, one can try to find a small fixed value $0 < \eta_0 < 1$ such that for any $\epsilon > 0$ there is a subset $X$ in $\mathbb{R}^2$ satisfying $$|X| \leq \epsilon |\left\{ M_\mathcal{B}\mathbb{1}_X > \eta_0 \right\}|.$$ If this holds then for any $p > 1$ we have $$ \int (M_\mathcal{B}\mathbb{1}_X)^p \geq \eta_0^p |\left\{ M_\mathcal{B}\mathbb{1}_X > \eta_0 \right\}| \geq \eta_0^p \frac{\|\mathbb{1}_X\|_p^p}{\epsilon} $$ since $|X|^\frac{1}{p} = \|\mathbb{1}_X \|_p$. Hence for any $\epsilon > 0$ we have $ \|M_\mathcal{B}\|_p \geq \eta_0^p\epsilon^{-\frac{1}{p}} $ and $\|M_\mathcal{B}\|_p^p = \infty$ for any $1 < p < \infty$. The question remains to understand how one can find/construct such a set $X$ ? Of course this possibility depends on the basis $\mathcal{B}$. For example consider the case where $\mathcal{B}:=\mathcal{R}$ is as big as possible. The following property is true (it is a consequence of proposition \ref{ P : gpt }) : for any large constant $A > 1$ there exists a finite family of rectangles $\{ R_i \}_{i \leq m}$ in $\mathcal{R}$ satisfying $$\left| \bigcup_{i \leq m}  2R_i \right| \geq A \left| \bigcup_{i \leq m}  R_i \right|.$$ Considering then the set $X = \bigcup_{i \leq m} R_i$ it is easy to see that one has $$ \left| X \right| \leq \frac{1}{A} \left| \{ M_\mathcal{R}\mathbb{1}_X > \frac{1}{4} \} \right|$$ which implies that the maximal operator $M_\mathcal{R}$ is a bad operator. A \textit{Perron tree} (or \textit{generalized Perron tree}) formed with a basis $\mathcal{B}$ of rectangles is a concrete construction of such a set $X$ (or more precisely a sequence of sets) for any $\epsilon > 0$ and a fixed value $\eta_0$.

\subsection*{From rectangles to triangles}

\begin{figure}[h!]
\centering
\includegraphics[scale=1.5]{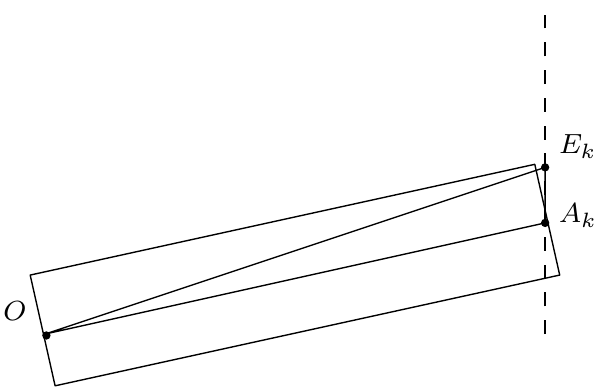}
\caption{ A rectangle $R_k$ and a triangle $T_k$, both object are oriented along $\simeq t_k$ and have an eccentricity $\simeq e_k$. }
\end{figure}

Without loss of generality, we will work at some point with triangles instead of rectangles. For any $k \geq 1$ define the triangle $T_k$ as  $$T_k := T_k(\boldsymbol{e},\boldsymbol{t}) = OA_kE_k$$ where $O = (0,0)$, $A_k= (1, t_k)$ and $E_k = (1, t_k + e_k)$. Loosely speaking, the triangle $T_k$ is a triangle which is \textit{oriented} along the direction $t_k$ and of \textit{eccentricity} $e_k$. Denoting by $\mathcal{B}'$ the basis generated by the triangles $T_k$ one can observe that we have the following property. For any $R \in \mathcal{B}$ there exists $T \in \mathcal{B}'$ satisfying for some vector $\Vec{t} \in \mathbb{R}^2$ $$\Vec{t} + \frac{1}{16}T \subset R \subset T$$ and conversely for any $T \in \mathcal{B}'$ there exists $R \in \mathcal{B}$ satisfying for some vector $\Vec{t} \in \mathbb{R}^2$ $$\Vec{t} + \frac{1}{16}R \subset T \subset R.$$ This implies that for any $f \in L_{\text{loc}}^1(\mathbb{R}^2)$ and $x \in \mathbb{R}^2$ we have $$ M_\mathcal{B}f(x) \simeq M_{\mathcal{B}'}f(x).$$ Hence it is equivalent to work with $\mathcal{B}$ or with $\mathcal{B}'$ and we will denote both basis by $\mathcal{B}$.

\section*{Acknowledgments}

I warmly thank Laurent Moonens, Emmanuel Russ and the two referees for their useful comments which have certainly improved the present text.

\section{Proof of Theorem \ref{T : main 1}}

It is well know that the operator $M_{ \{ 0\}}$ associated to the basis $\mathcal{R}_{ \{ 0\}} = \{ R \in \mathcal{R} : \omega_R = 0\}$ is a good operator. Now by easy geometric observation and using the property that $t_k < C e_k$, one can prove that for any $R \in \mathcal{B}$ there exists a rectangle $P \in \mathcal{R}_{ \{ 0\}}$ such that $$R \subset P$$ and also $$|P| \leq 8(1+C) |R|.$$ This property allows us to use the  operator $M_{ \{ 0\}}$ in order to dominate pointwise $M_\mathcal{B}$. Fix any $f \in L^1_{loc}(\mathbb{R}^2)$ and any $R \in \mathcal{B}$ and the associated rectangle $P \in \mathcal{R}_{ \{ 0 \} }$ ; we have $$\frac{1}{|R|} \int_R |f| \leq \frac{8(1+C)}{|P|} \int_P |f| $$ which shows that for any $x \in \mathbb{R}^2$ we have $$M_\mathcal{B}f(x) \leq 8(1+C) M_{ \{ 0\}}f(x).$$ The conclusion comes from the fact that the strong maximal operator $M_{ \{ 0\}}$ is a good operator.

\section{Proof of Theorem \ref{T : main 2}}\label{S:R}

The proof of Theorem \ref{T : main 2} relies on geometric estimates and the construction of generalized Perron trees.

\subsection*{Geometric estimates}\label{S:ge}

\begin{figure}[h!]
\centering
\includegraphics[scale=0.8]{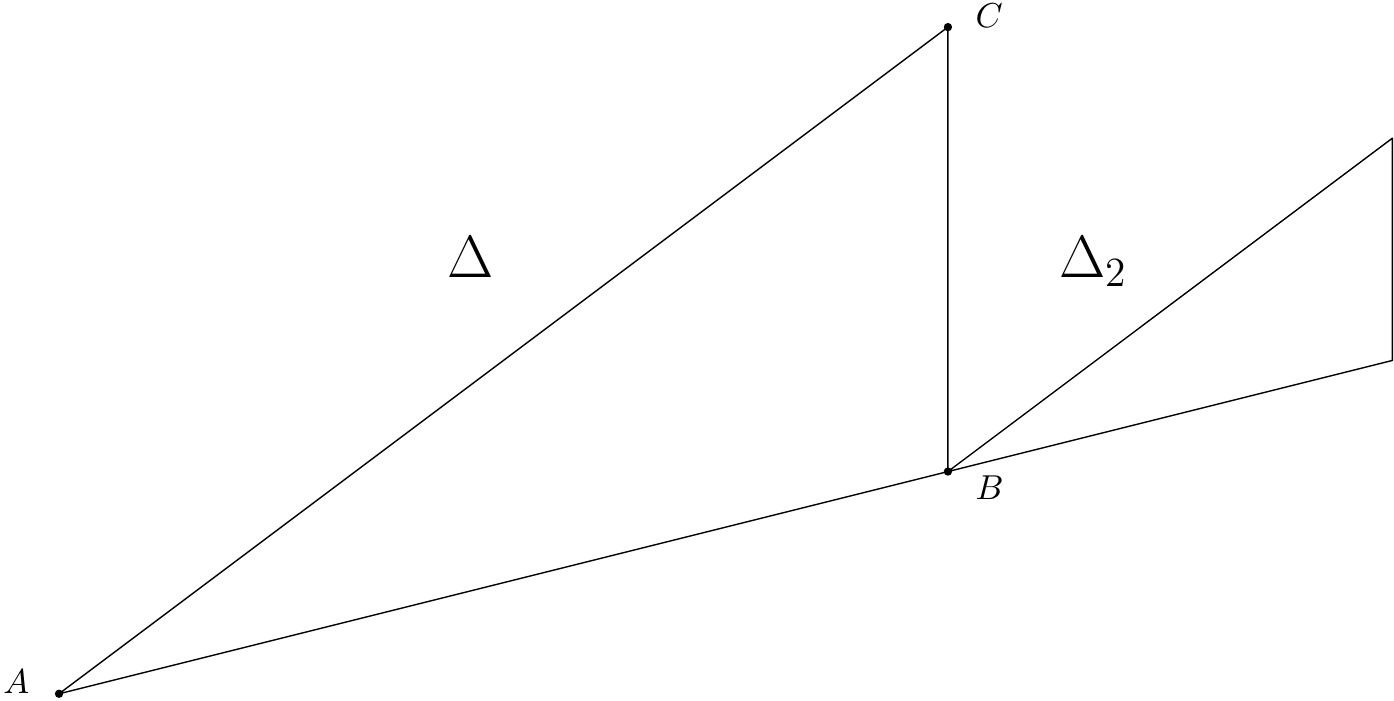}
\caption{ The triangles $\Delta$ and $\Delta_2$ will usually be in this position. }
\end{figure}

We start by establishing two geometric estimates. Fix an arbitrary open triangle $\Delta = ABC$ and consider the triangle $\Delta_2$ defined as $\Delta_2 := \Vec{B} + \frac{1}{2}(\Delta - \Vec{A})$.

\begin{lemma}[Geometric estimate I]\label{L : geo estimate 1}
The following inclusion holds $$\Delta_2 \subset \left\{ M_{ \left\{\Delta\right\} }\mathbb{1}_{\Delta} \geq \frac{1}{4} \right\}.$$ In other words, the level set  $\left\{ M_{\left\{\Delta\right\}}\mathbb{1}_{\Delta} \geq \frac{1}{4} \right\}$ contains $\Delta_2$.
\end{lemma}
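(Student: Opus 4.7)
The plan is to exhibit a single admissible dilate of $\Delta$ that, for every $x \in \Delta_2$, witnesses the maximal average $\tfrac14$. The natural candidate is $R := -\vec A + 2\Delta$, namely the homothety of $\Delta$ centered at $A$ with ratio $2$; this is a translate-dilate of $\Delta$ (so $R \in \{\Delta\}^*$) with vertices $A$, $2B-A$, $2C-A$, and $|R| = 4|\Delta|$.

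First I would check that $\Delta \subset R$: the vertices $B$ and $C$ of $\Delta$ are the midpoints of the edges $[A,2B-A]$ and $[A,2C-A]$, so by convexity $\Delta \subset R$ and hence $|R \cap \Delta| = |\Delta|$. This immediately gives
\begin{equation*}
\frac{1}{|R|}\int_R \mathbb{1}_\Delta = \frac{|\Delta|}{4|\Delta|} = \frac{1}{4}.
\end{equation*}

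Next I would verify the inclusion $\Delta_2 \subset R$. The vertices of $\Delta_2 = \vec B + \tfrac12(\Delta - \vec A)$ are $B$, $\tfrac32 B - \tfrac12 A$, and $B + \tfrac12(C-A)$. The first two lie on the segment $[A,2B-A]$, which is an edge of $R$; the third admits the barycentric expression $\tfrac14 A + \tfrac12(2B-A) + \tfrac14(2C-A)$ with respect to the vertices of $R$, so it lies in $R$. Convexity of $R$ then yields $\Delta_2 \subset R$.

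Combining the two points, for every $x \in \Delta_2$ the set $R$ is an element of $\{\Delta\}^*$ containing $x$ with $|R\cap\Delta|/|R| = 1/4$, so $M_{\{\Delta\}}\mathbb{1}_\Delta(x) \geq 1/4$, proving the lemma. The only mildly delicate step is the barycentric verification of the third vertex of $\Delta_2$, but this is a direct calculation; there is no real analytic obstacle, as the whole content of the lemma is this one geometric inclusion.
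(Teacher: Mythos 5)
Your proof is correct and is genuinely different from the paper's. The paper uses, for each point $x \in \Delta_2$, a \emph{translate} $\overrightarrow{Bx} + \Delta$ of $\Delta$ (varying with $x$) and observes that the overlap $|\Delta \cap (\overrightarrow{Bx} + \Delta)|$ is at least a quarter of the area — a ``sliding copy'' argument where the witnessing set depends on the point. You instead fix a \emph{single} element $R = -\vec A + 2\Delta$ of $\{\Delta\}^*$ (the homothety of $\Delta$ centered at $A$ with ratio $2$), verify that $\Delta \subset R$ so the average of $\mathbb{1}_\Delta$ over $R$ is exactly $1/4$, and then verify the purely geometric inclusion $\Delta_2 \subset R$ by a barycentric computation on the three vertices of $\Delta_2$. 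Your computations check out: $B$ is the midpoint of $[A, 2B-A]$, $\tfrac32 B - \tfrac12 A = \tfrac14 A + \tfrac34(2B-A)$ lies on the same edge, and $B + \tfrac12(C-A) = \tfrac14 A + \tfrac12(2B-A) + \tfrac14(2C-A)$ is a convex combination of the vertices of $R$. Your route is arguably cleaner, since it replaces a pointwise overlap estimate by a single static inclusion with an exact constant, and it sidesteps the (harmless but real) topological quibble in the paper's version that $B$ is only a boundary point of the open triangle $\Delta$, so that writing $x \in \overrightarrow{Bx} + \Delta$ requires a small aside. The paper's version, on the other hand, is the one that generalizes directly to Geometric Estimate II, where one must use a thinner triangle $T$ sliding over $\Delta$ rather than a dilate, so the translate-based picture is the one the author carries forward.
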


\begin{figure}[h!]
\centering
\includegraphics[scale=0.6]{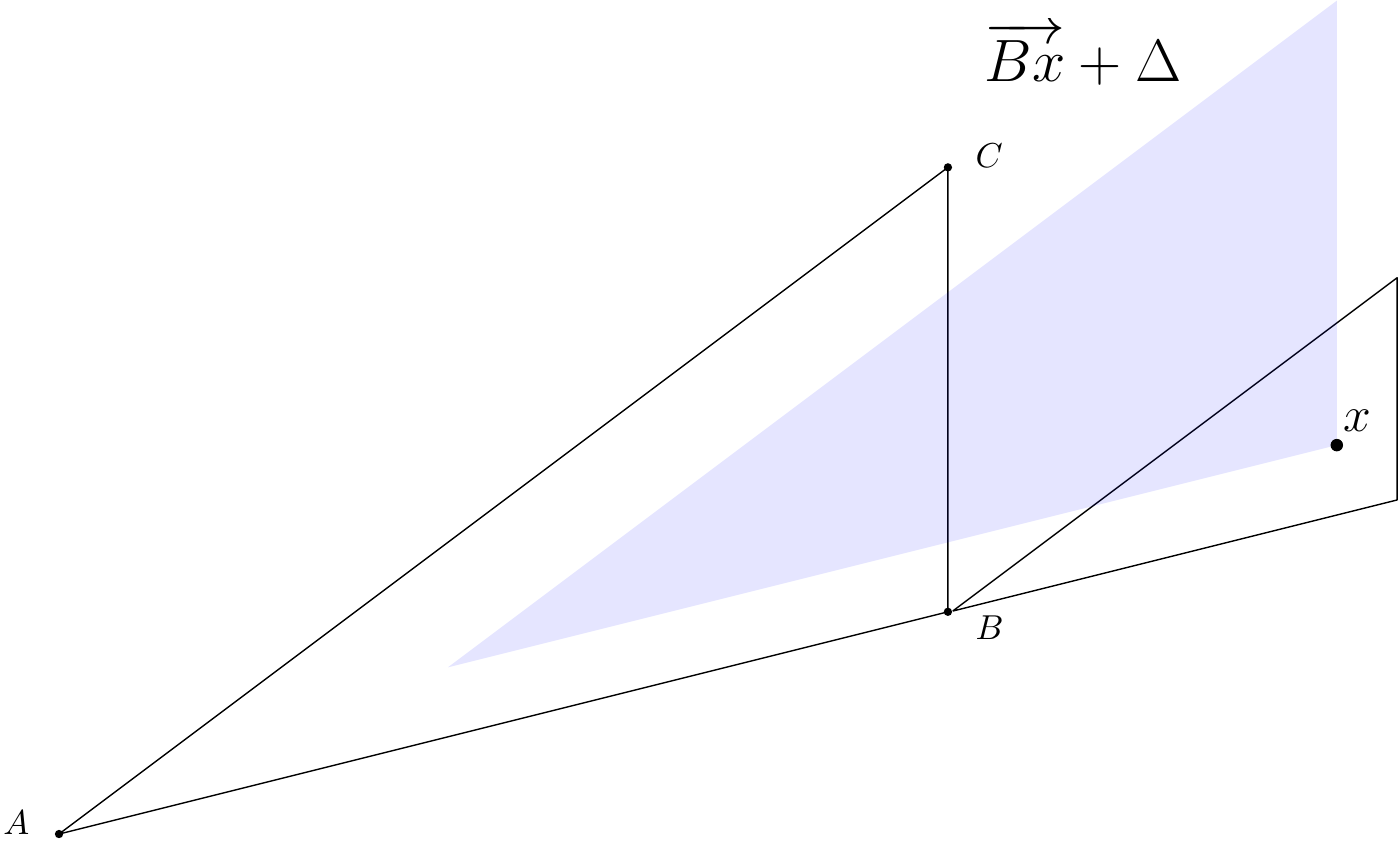}
\caption{The proof of lemma \ref{L : geo estimate 1} relies on the fact that $|\Delta \cap \left(\protect\overrightarrow{Bx} + \Delta\right)  | \geq \frac{1}{4}|\Delta|$.}
\end{figure}

\begin{proof}
Fix $x \in \Vec{B} + \frac{1}{2}(\Delta-\Vec{A})$. It suffices to observe that we have $x \in \overrightarrow{Bx} + \Delta$ and that $| \Delta \cap ( \overrightarrow{Bx} + \Delta ) | \geq \frac{1}{4} |\overrightarrow{Bx} + \Delta |$. Hence $x \in \left\{ M_{\left\{\Delta\right\}}\mathbb{1}_{\Delta} \geq \frac{1}{4} \right\}$.
\end{proof}

Actually we need a more general version of the previous estimate. For $e\ \in \mathbb{R}_+$ and $\Delta = ABC$ as before, define the triangle $T$ as $$T := T(e,\Delta) = AB(B+e\overrightarrow{BC}).$$

\begin{figure}[h!]
\centering
\includegraphics[scale=0.6]{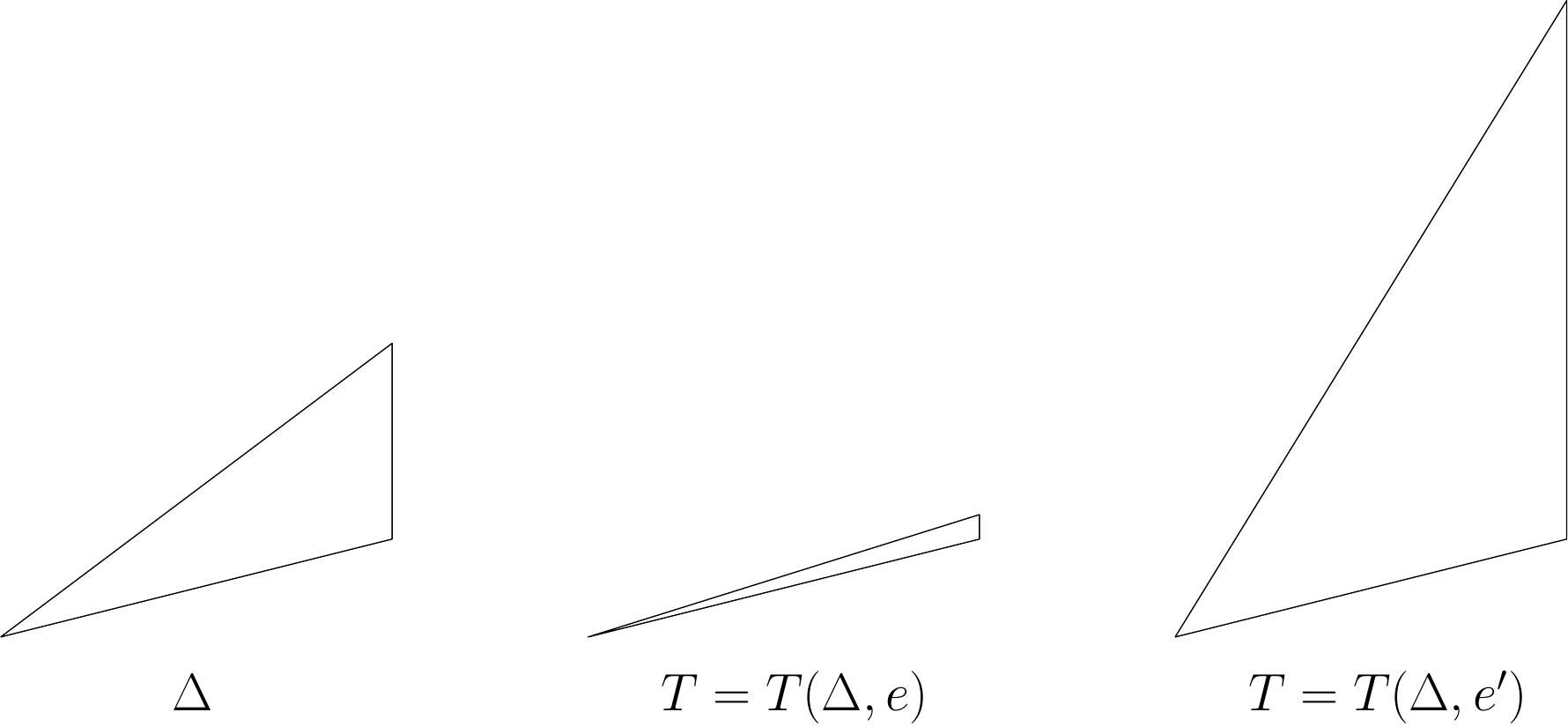}
\caption{A representation of $\Delta$ and $T=T(\Delta,e)$ for $e \ll 1$ and $e' > 1$.}
\end{figure}

\begin{figure}[h!]
\centering
\includegraphics[scale=0.7]{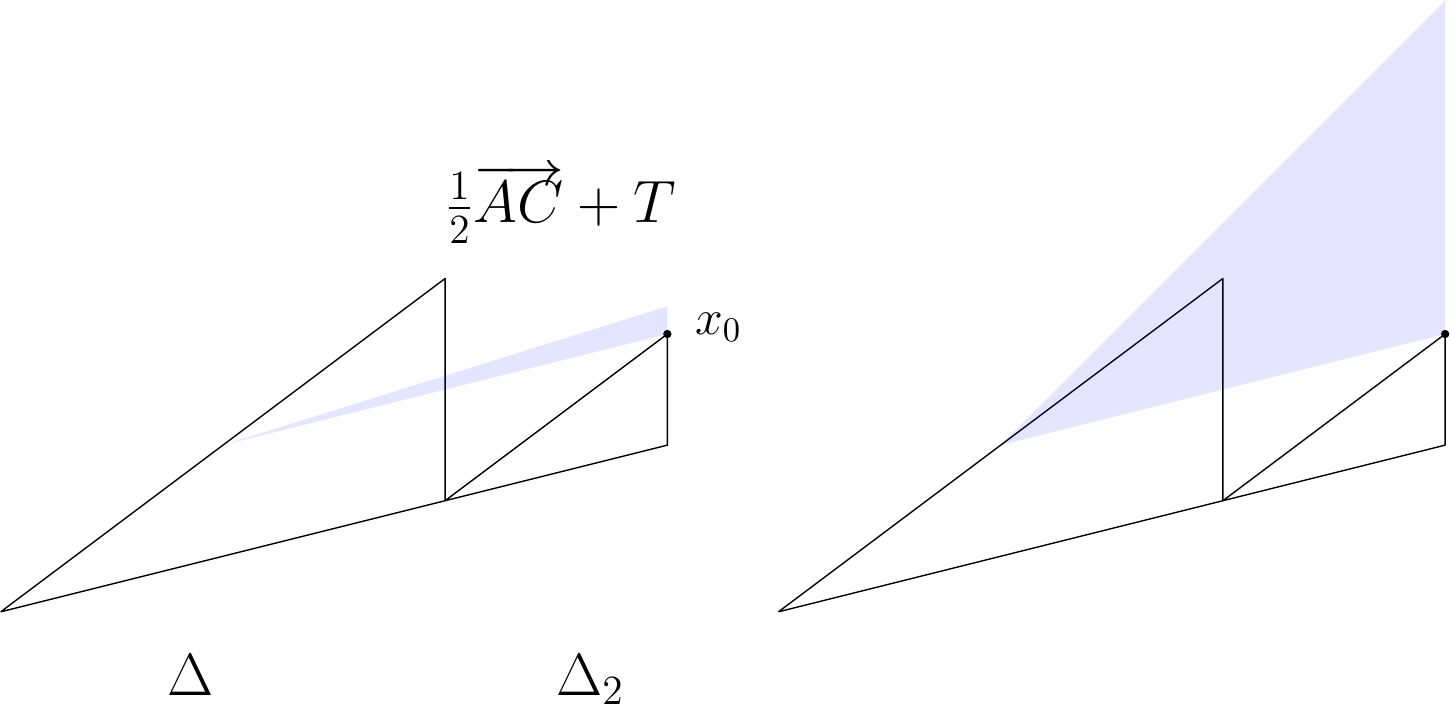}
\caption{An illustration of the argument of lemma \ref{L : estimation II} ; the left side represents the case $0 < e \leq 1$ and the right side the case $e > 1$ ; the triangle in shaded blue represents $\frac{1}{2} \protect\overrightarrow{AC} +  T$.}\label{FIG : LEMMA 2}
\end{figure}

\begin{lemma}[Geometric estimate II]\label{L : estimation II} For any pair $(\Delta,T)$ as defined above, the following inclusion holds $\Delta_2 \subset \left\{ M_{ \left\{T\right\} } \mathbb{1}_{\Delta} \geq \eta({e}) \right\}$ where $ \eta(e) = \inf \left\{ \frac{1}{4} , \frac{1}{4e} \right\} $.
\end{lemma}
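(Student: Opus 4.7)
The plan is to mimic the proof of Lemma~\ref{L : geo estimate 1}. Given $x \in \Delta_2$, I exhibit a translate $T + \vec{v}$ of $T$, with $\vec{v} := \overrightarrow{Bx}$, such that (i) $x \in T + \vec{v}$ and (ii) $|\Delta \cap (T + \vec{v})| \ge \eta(e)\, |T + \vec{v}|$. This directly yields $M_{\{T\}}\mathbb{1}_\Delta(x) \ge \eta(e)$. The inclusion (i) is automatic since $B \in T$ and $B + \vec{v} = x$, so everything reduces to (ii).

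The key geometric observation is that $T$ and $\Delta$ share the edge $AB$, and the third vertex of $T$ lies on the ray from $B$ in the direction $\overrightarrow{BC}$. Consequently $T \subset \Delta$ when $0 < e \le 1$ and $\Delta \subset T$ when $e \ge 1$. In either case the intersection $K := T \cap \Delta$ is a triangle of area $\min(|T|,|\Delta|) = \min(e,1)\,|\Delta|$ containing $A$ and $B$; this unifies the two configurations of Figure~\ref{FIG : LEMMA 2}.

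For the scaling step I reuse the trick from Lemma~\ref{L : geo estimate 1}, but applied to $K$: set
\[
K_v := \vec{A} + \vec{v} + \tfrac{1}{2}(K - \vec{A}),
\]
a triangle of area $\tfrac{1}{4}|K|$. I then check that $K_v$ sits inside both $\Delta$ and $T + \vec{v}$. On one hand, a point of $K_v - \vec{v}$ has the form $A + \tfrac{1}{2}(k - A)$ with $k \in K \subset T$; since $A \in T$ and $T$ is convex, this point belongs to $T$, giving $K_v \subset T + \vec{v}$. On the other hand, a point of $K_v$ has the form $\tfrac{1}{2}(A + 2\vec{v}) + \tfrac{1}{2} k$ with $k \in K \subset \Delta$, and $A + 2\vec{v} \in \Delta$ because $x \in \Delta_2$ means $\vec{Bx} \in \tfrac{1}{2}(\Delta - \vec{A})$; convexity of $\Delta$ finishes the job.

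Combining the inclusions yields $|\Delta \cap (T + \vec{v})| \ge |K_v| = \tfrac{1}{4}|K|$, whence
\[
\frac{|\Delta \cap (T + \vec{v})|}{|T + \vec{v}|} \ge \frac{|K|}{4\,|T|} = \tfrac{1}{4}\min\!\left(1, \tfrac{1}{e}\right) = \eta(e).
\]
I do not anticipate any serious obstacle: the content is simply to run the argument of Lemma~\ref{L : geo estimate 1} with $K$ in place of $\Delta$, and the case split built into $\eta(e) = \min(1/4, 1/(4e))$ is produced automatically by the ratio $|K|/|T| = \min(1, 1/e)$. The main conceptual move is choosing to scale from $K = T \cap \Delta$ rather than from $T$ or $\Delta$ individually, which is what makes both required inclusions work uniformly in $e$.
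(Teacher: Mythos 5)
Your proof is correct and follows essentially the same route as the paper's: translate $T$ by $\overrightarrow{Bx}$ and compare the overlap with $\Delta$ against $|T|$. In fact your version is more careful. The paper checks only the single putative worst-case point $x_0 = B + \tfrac{1}{2}\overrightarrow{AC}$ and splits into the cases $e \le 1$, $e > 1$ using figure-based area counts; your argument verifies the inclusion for every $x \in \Delta_2$ and removes the case split by a single observation -- since $T$ and $\Delta$ share the edge $AB$ with their third vertices on a common ray from $B$, one of $T \subset \Delta$ or $\Delta \subset T$ holds, so $K := T \cap \Delta$ is a triangle with $|K| = \min(|T|,|\Delta|)$, and running the scaling trick of Lemma \ref{L : geo estimate 1} on $K$ produces $\eta(e) = \tfrac{1}{4}\min(1,1/e)$ automatically. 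All the intermediate steps ($K_v \subset T + \vec{v}$ by convexity of $T$ and $A \in \overline{T}$, and $K_v \subset \Delta$ via $A + 2\vec{v} \in \Delta$ from the definition of $\Delta_2$) check out.
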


\begin{proof}
The proof is akin to the proof of lemma \ref{L : geo estimate 1} and we invite the reader to look at figure \ref{FIG : LEMMA 2} for a geometric representation. It is enough to check $$x_0 \in  \left\{ M_{ \left\{T\right\}} \mathbb{1}_{\Delta} > \eta(e) \right\} $$ where $x_0 = B + \frac{1}{2}  \overrightarrow{AC} $ because this is the worst case. To begin with, observe that we have $x_0 \in \frac{1}{2} \overrightarrow{AC} +  T$. We distinguish then two situations ; if we have $$0 < e \leq 1 $$ we claim that we are in the situation corresponding to the left situation in figure \ref{FIG : LEMMA 2}, that is to say we have $$ \left|\Delta \cap  \left( \frac{1}{2} \overrightarrow{AC} +  T \right)\right| = \frac{1}{4} |T|$$ and hence also $$x_0 \in \left\{ M_{ \left\{T\right\} } \mathbb{1}_{\Delta} \geq \frac{1}{4} \right\}.$$ The second situation corresponds to the case where $1 < e $ ; in this case, we have (see figure \ref{FIG : LEMMA 2} again) $$ \left|\Delta \cap  \left( \frac{1}{2} \overrightarrow{AC} +  T \right) \right| \geq \frac{1}{4}\left|\Delta \right| \geq \frac{1}{4e} |T|.$$ This shows that we have $x_0 \in \left\{ M_{ \left\{T\right\} } \mathbb{1}_{\Delta} > \frac{1}{4e} \right\}$ and concludes the proof.
\end{proof}

\subsection*{Generalized Perron trees}\label{S:gpt}

Denote by $\Delta_k$ the triangle whose vertices are the points $O, A_k = (1, t_k)$ and $A_{k+1} = (1, t_{k+1})$. Recall that we have supposed $\tau_{\boldsymbol{t}} < \infty$. We now give a slighlty improved version of the construction of generalized Perron trees as defined in \cite{KATHRYN JAN}

\begin{figure}[h!]
\centering
\includegraphics[scale=0.8]{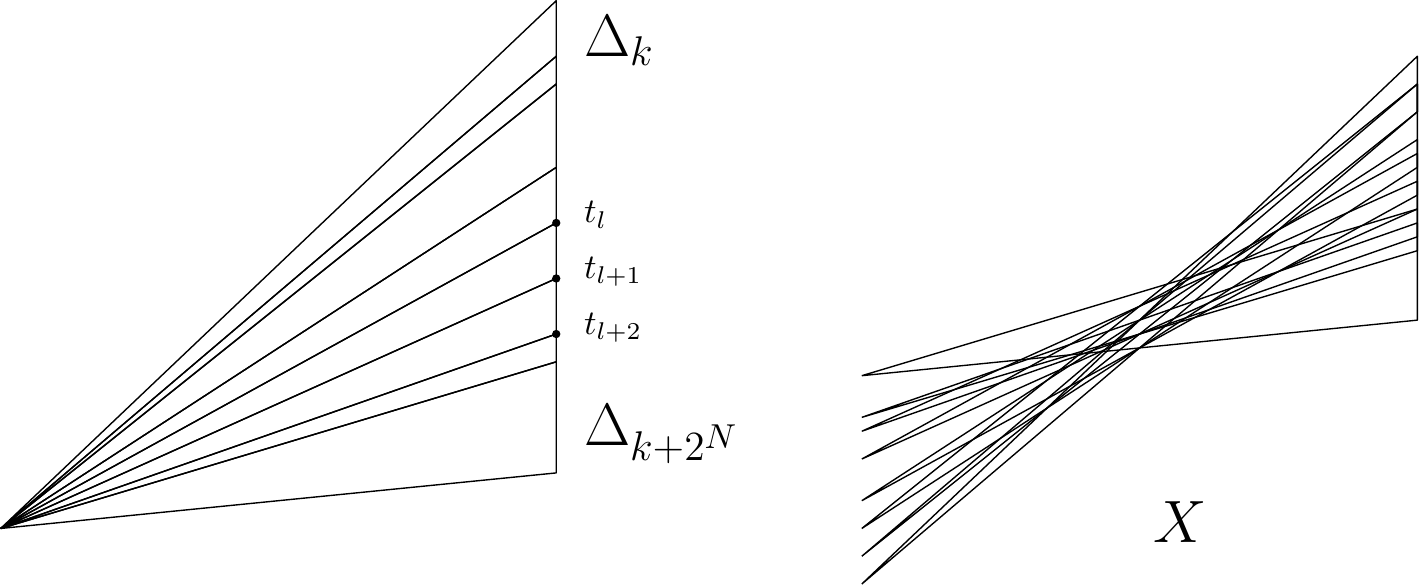}
\caption{ A representation of some $\Delta_k$ and on the left side a Perron tree $X$ generated with those triangles. The idea is that for large $n$ one has $|X| \ll |\Delta_k \sqcup \dots \sqcup \Delta_{k+2^N}|$ ; plus the second property of proposition \ref{ P : gpt }.}
\end{figure}

\begin{prp}[Generalized Perron Tree]\label{ P : gpt }
For any positive ratio $\alpha$ close to $1$ and any integer $n \geq 1$, there exists an integer $N \gg 1$ and $2^n$ vectors $\Vec{s}_k := (0,s_k)$ such that defining the set $$X = \bigcup_{N+1 \leq  k \leq N + 2^n} \left( \Vec{s}_k + \Delta_k \right)$$ we have the following properties \begin{itemize}
    \item  $ |X| \leq \left( \alpha^{2n} + \tau_{\boldsymbol{t}}(1-\alpha) \right) \left|\Delta_{N+1} \sqcup \dots \sqcup \Delta_{N+2^n}\right|$;
    \item for any $k \neq l$ the triangles $\left( \Vec{A}_k + \Vec{s}_k \right)+ \frac{1}{2}\Delta_k$ and  $\left( \Vec{A}_l + \Vec{s}_l \right)+ \frac{1}{2}\Delta_l$ are disjoint.
\end{itemize} We say that the set $X$ is a \textit{generalized Perron tree of scale $(\alpha,n)$} and we denote it by $ X_{\alpha,n}(\boldsymbol{t})$.
\end{prp}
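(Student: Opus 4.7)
The plan is to follow the iterative construction of generalized Perron trees developed in \cite{KATHRYN JAN}, adapted to the sequence $\boldsymbol{t}$ at hand. The set $X$ will be built by $n$ successive pairing steps: at each step, pairs of adjacent (clusters of) triangles are slid vertically so as to nest inside a common scaled envelope, leaving only small ``sprouts'' sticking out.

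First I would establish the elementary pairing lemma. Given two triangles $\Gamma_L = OPQ$ and $\Gamma_R = OQR$ with common apex $O$, common edge $OQ$ and joint envelope $\Gamma = OPR$, I claim there exist vertical vectors $\Vec{u}_L, \Vec{u}_R$ such that the union $(\Vec{u}_L + \Gamma_L) \cup (\Vec{u}_R + \Gamma_R)$ splits into a ``trunk'' contained in $\alpha \Gamma$ plus two ``sprouts'' sticking out near the base of $\Gamma$. An elementary area count shows that the trunk contributes area roughly $\alpha^{2} |\Gamma|$, while the two sprouts have total area controlled by $(1-\alpha)$ times a factor involving the ratio $\tfrac{|\Gamma_L|}{|\Gamma_R|} + \tfrac{|\Gamma_R|}{|\Gamma_L|}$ of base lengths.

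Next I would apply this operation inductively. At step $j$ (for $1 \leq j \leq n$) I pair the $2^{n-j+1}$ clusters produced at the previous step into $2^{n-j}$ new ones. Crucially, at step $j$ the base-length ratios that enter the sprout bound are precisely of the form $\tfrac{t_{k+2^j} - t_{k+2^{j-1}}}{t_{k+2^{j-1}} - t_k}$ and its reciprocal, so by the very definition of $\tau_{\boldsymbol{t}}$ they are bounded by $\tau_{\boldsymbol{t}}$ uniformly in $j$. After $n$ iterations the remaining trunk has area at most $\alpha^{2n} \bigl| \Delta_{N+1} \sqcup \dots \sqcup \Delta_{N+2^n} \bigr|$, while the accumulated sprout area is bounded by $\tau_{\boldsymbol{t}}(1-\alpha) \bigl| \Delta_{N+1} \sqcup \dots \sqcup \Delta_{N+2^n} \bigr|$, which together yield the claimed estimate.

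The main obstacle is controlling the cumulative sprout contribution so that it stays of order $\tau_{\boldsymbol{t}}(1-\alpha)$ instead of blowing up to $O(n\, \tau_{\boldsymbol{t}}(1-\alpha))$; the key observation is that the sprouts produced at successive levels nest inside geometrically decreasing regions, so they form a summable series whose total remains bounded by a fixed multiple of $\tau_{\boldsymbol{t}}(1-\alpha)$ independent of $n$. Finally, the disjointness claim for the half-sized pieces $(\Vec{A}_k + \Vec{s}_k) + \tfrac{1}{2}\Delta_k$ follows from the construction itself: the pairing step only shifts the ``apex half'' of each triangle (the part near $O$), so the ``base halves'' close to the line $x=1$ stay within their original vertical sub-segments of that line and hence remain pairwise disjoint. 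The integer $N$ is chosen large enough that the angles $t_k$ involved are small, which keeps all vertical slides $s_k$ within a controlled strip and makes the nested Perron construction consistent.
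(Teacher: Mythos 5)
The paper does not actually prove this proposition: for the area estimate it defers to the construction of Hare and R\"onning in \cite{KATHRYN JAN}, and for the disjointness of the half-triangles it only remarks that this ``is a simple observation.'' Your sketch of the iterative pairing is therefore the right thing to compare against the cited reference, and you correctly identify the two key ingredients --- the dyadic pairing and the use of $\tau_{\boldsymbol{t}}$ to bound the base-length ratios $\frac{t_{k+2^j}-t_{k+2^{j-1}}}{t_{k+2^{j-1}}-t_k}$ at level $j$ (with $N$ chosen large precisely so that $l=2^{j-1}\leq k$ holds for all indices in play, which is the real reason $N\gg 1$, not the smallness of the angles). The sprout-summability argument is stated only at the level of a slogan (``nest inside geometrically decreasing regions''): to actually obtain the single factor $\tau_{\boldsymbol{t}}(1-\alpha)$ rather than $O(n)\tau_{\boldsymbol{t}}(1-\alpha)$ one must show that the sprout area produced at level $j$ is bounded by a constant times $(1-\alpha)\alpha^{2(j-1)}$ of the original area, so that the sum over $j$ is geometric. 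That estimate is the heart of \cite{KATHRYN JAN} and your proposal does not reproduce it.

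More seriously, your justification of the disjointness bullet is wrong. You write that ``the pairing step only shifts the apex half of each triangle,'' so that ``the base halves close to the line $x=1$ stay within their original vertical sub-segments.'' Both statements are false: the Perron-tree translations $\Vec{s}_k$ move the \emph{whole} triangle $\Delta_k$, and the set $(\Vec{A}_k+\Vec{s}_k)+\tfrac12\Delta_k$ is not a sub-triangle of $\Vec{s}_k+\Delta_k$ at all --- it sits in the strip $1\leq x\leq \tfrac32$, to the right of the base line $x=1$, and is an extension of the triangle beyond its base. The correct (indeed simple) observation is the following. The half-triangle $(\Vec{A}_k+\Vec{s}_k)+\tfrac12\Delta_k$ has apex $(1,t_k+s_k)$ and, over $x\in[1,\tfrac32]$, fills the vertical interval $\bigl[(t_k+s_k)+t_{k+1}(x-1),\,(t_k+s_k)+t_k(x-1)\bigr]$. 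For $k<l$ one has $t_{k+1}\geq t_l$, so the difference between the bottom edge of the $k$-th half-triangle and the top edge of the $l$-th half-triangle equals $(t_k+s_k)-(t_l+s_l)-(t_l-t_{k+1})(x-1)\geq (t_k+s_k)-(t_l+s_l)$. Thus disjointness follows exactly when the apex heights $t_k+s_k$ remain strictly decreasing in $k$, which is a property of the pairing construction: each slide moves a lower cluster upward without overtaking the cluster above it. This monotonicity of the apex heights, not any fixity of ``base halves,'' is what you would need to check to close the gap.
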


Note that the fact that the triangles $ \left( \Vec{A}_k + \Vec{s}_k \right)+ \frac{1}{2}\Delta_k$ and  $ \left( \Vec{A}_l + \Vec{s}_l \right)+ \frac{1}{2}\Delta_l$ are disjoint is not proven in \cite{KATHRYN JAN} yet it is a simple observation. Observe that for any $\epsilon > 0$, one can first choose $\alpha$ close to one and then $n$ large enough in order to have $$|X_{\alpha,n}(\boldsymbol{t})| \leq \epsilon  \left|\Delta_{N+1} \sqcup \dots \sqcup \Delta_{N+2^n}\right|$$ for some large $N$. To obtain such an inequality, we need a sufficient condition on the thin triangles $\Delta_k$ that ensures in some sense that they are comparable. Indeed, suppose that we had defined for any $k \geq 1$ the triangle $\Delta_k$ as the one whose vertices are the points $O,  G_k = (1, \frac{1}{2^k})$ and $G_{k+1} = (1,\frac{1}{2^{k+1}})$. In this situation, for any $I \subset \mathbb{N}$ and any sequence of vectors $\left\{ \Vec{s}_i \right\}_{i \in I} \subset \mathbb{R}^2$ the set $X_I$ defined as $$ X_I = \bigcup_{i \in I}  \left( \Vec{s}_i + \Delta_i \right) $$ satisfies the following inequality $$ |X_I| \geq |\Delta_{i_0}| \geq \frac{1}{2} \left|\bigcup_{i \in I} {\Delta_i}\right|$$ where $i_0 := \min I$. Hence we cannot hope to stack up the triangles $\Delta_k$ into a set $X$ that has a small area compared to the sum of the areas of the $\Delta_k$. Hopefully this example shed light on the condition imposed on $\boldsymbol{t}$ which is  $$ \tau_{\boldsymbol{t}} := \sup_{k \geq 0, l \leq k} \left( \frac{t_{k+2l} - t_{k+l}}{t_{k+l} - t_{k}} + \frac{t_{k+l} - t_{k}}{t_{k+2l} - t_{k+l}} \right) < \infty.$$ This ensure that the triangles $\Delta_k$ are comparable in some sense and that we can construct \textit{generalized Perron trees} with them.

\begin{figure}[h!]
\centering
\includegraphics[scale=0.8]{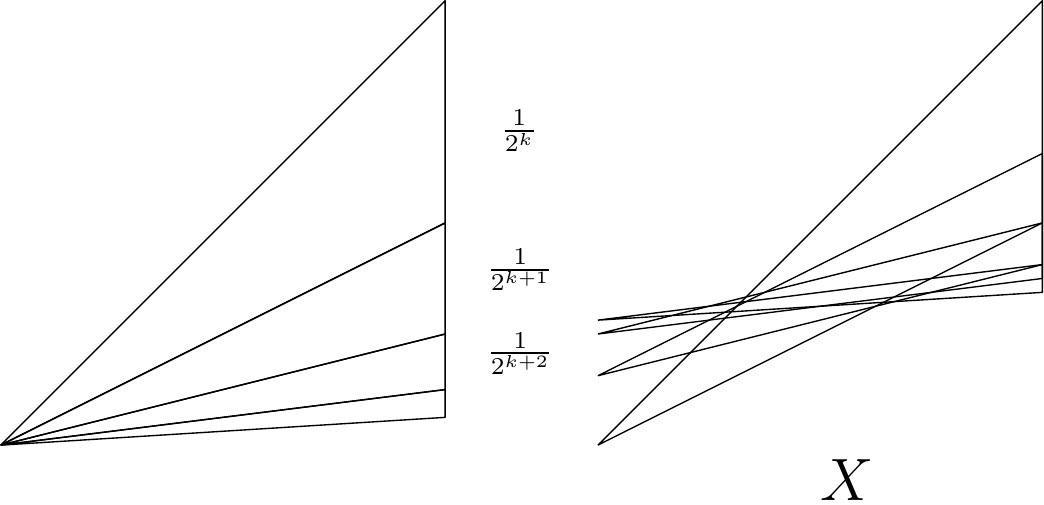}
\caption{It quite difficult to construct a Perron tree ; one needs a condition to ensure that the triangles $\Delta_k$ are \textit{comparable} in some sense. On this figure, the $\Delta_k$ differs too much in volume and one will always have $|X| \simeq |\cup \Delta_i|$ as explained.}
\end{figure}


\subsection*{Proof of Theorem \ref{T : main 2}}

Recall that we suppose there is a constant ${\mu_0} > 0$ such that for any $k \geq 1$, $e_k < {\mu_0} |t_k-t_{k+1}|$. To begin with, we are going to construct a Perron tree $X_{\alpha,n}(\boldsymbol{t})$ with the triangles $\left\{ \Delta_k \right\}_{k \geq 1}$. Then we will exploit this Perron tree $X_{\alpha,n}(\boldsymbol{t})$ with the triangles $\mathcal{B} = \left\{ T_k \right\}_{k \geq 1}$ to show that $M_\mathcal{B}$ is a bad operator. Precisely we prove the following claim.

\begin{claim}
For any $\alpha$ close  to $1$ and any $n \in \mathbb{N}$, the Perron tree $X := X_{\alpha,n}(\boldsymbol{t})$ satisfies the following inequality 
   $$ |X| \leq  \epsilon |\left\{ M_\mathcal{B}\mathbb{1}_{X} > \eta({\mu_0}) \right\}|$$ where $\epsilon = \alpha^{2n} + \tau_{\boldsymbol{t}}(1-\alpha)$.
\end{claim}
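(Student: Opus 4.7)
The strategy is to take $X$ to be the generalized Perron tree of Proposition \ref{ P : gpt } and then, branch by branch, to pass from the ``tree indicator'' to the desired level set by means of Lemma \ref{L : estimation II}. Concretely, apply Proposition \ref{ P : gpt } with the given $(\alpha, n)$ to obtain
\[
X = X_{\alpha,n}(\boldsymbol{t}) = \bigcup_{N+1 \leq k \leq N+2^n}(\vec{s}_k + \Delta_k),
\]
which satisfies (i) $|X| \leq \epsilon\,|\Delta_{N+1} \sqcup \dots \sqcup \Delta_{N+2^n}|$ and (ii) the translated half-triangles $\widetilde{\Delta}_k := (\vec{A}_k + \vec{s}_k) + \tfrac{1}{2}\Delta_k$ are pairwise disjoint. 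Since each $\widetilde{\Delta}_k$ has area $\tfrac14 |\Delta_k|$, property (ii) gives $\sum_k |\widetilde{\Delta}_k| = \tfrac{1}{4}|\Delta_{N+1} \sqcup \dots \sqcup \Delta_{N+2^n}|$, so once each $\widetilde{\Delta}_k$ is placed inside $\{M_{\mathcal{B}} \mathbb{1}_X \geq \eta(\mu_0)\}$, combining with (i) produces $|X| \leq 4\epsilon |\{M_{\mathcal{B}}\mathbb{1}_X \geq \eta(\mu_0)\}|$; the cosmetic factor $4$ is absorbed by choosing $\alpha$ marginally closer to $1$.

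\textbf{Branchwise inclusion via Lemma \ref{L : estimation II}.} Fix $k$ and write $\Delta'_k := \vec{s}_k + \Delta_k$. The triangles $\Delta_k = OA_kA_{k+1}$ and $T_k = OA_kE_k$ both sit over the segment $x=1$ with the origin as common apex, so after translation by $\vec{s}_k$ (legitimate since $\mathcal{B}^*$ is translation and dilation invariant) one produces a triangle $T'_k \in \mathcal{B}^*$ such that the pair $(\Delta'_k, T'_k)$ fits the $(\Delta, T(e,\Delta))$-configuration of Lemma \ref{L : estimation II} with
\[
e = \frac{e_k}{|t_k - t_{k+1}|}.
\]
The standing hypothesis $e_k < \mu_0 |t_k - t_{k+1}|$ gives $e < \mu_0$, and since $\eta$ is non-increasing one has $\eta(e) \geq \eta(\mu_0)$. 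Lemma \ref{L : estimation II} thus yields
\[
\widetilde{\Delta}_k = (\Delta'_k)_2 \subset \{M_{\{T'_k\}} \mathbb{1}_{\Delta'_k} \geq \eta(\mu_0)\} \subset \{M_{\mathcal{B}}\mathbb{1}_X \geq \eta(\mu_0)\},
\]
the final inclusion using $\Delta'_k \subset X$ together with $T'_k \in \mathcal{B}^*$.

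\textbf{Conclusion and main obstacle.} Taking the union over $k$ and invoking the disjointness in (ii) gives $|\{M_{\mathcal{B}}\mathbb{1}_X \geq \eta(\mu_0)\}| \geq \tfrac{1}{4}|\Delta_{N+1}\sqcup\dots\sqcup\Delta_{N+2^n}|$, and (i) closes the argument. The non-trivial step is the geometric matching in the second paragraph: one must check carefully that, after the appropriate translation, the pair $(\Delta'_k, T'_k)$ really falls under the configuration of Lemma \ref{L : estimation II}, with the ``free'' vertex of the basis triangle on the correct side of the shared edge and with the correct identification of $e$. Because $E_k$ lies on the side of the line $OA_k$ opposite to $A_{k+1}$, one may have to replace $T_k$ by a neighbouring $T_{k \pm 1}$; it is here that the assumption $\tau_{\boldsymbol{t}} < \infty$ is crucial, as it keeps the consecutive gaps $|t_k - t_{k+1}|$ comparable and hence keeps $e$ bounded by a fixed multiple of $\mu_0$.
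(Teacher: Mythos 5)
Your proposal follows the paper's proof essentially verbatim: construct $X$ via Proposition \ref{ P : gpt }, apply Lemma \ref{L : estimation II} to each pair $(\vec{s}_k + \Delta_k, \vec{s}_k + T_k)$ with $e = e_k/|t_k - t_{k+1}| < \mu_0$, invoke the disjointness of the translated half-triangles, and absorb the resulting factor of $4$. The side issue you flag at the end is real (the paper's $E_k = (1, t_k + e_k)$ places $T_k$ on the opposite side of $OA_k$ from $\Delta_k$, and the paper's shift $\vec{A}_{k+1} + \vec{s}_k$ also does not literally match the $\vec{A}_k + \vec{s}_k$ shift appearing in Proposition \ref{ P : gpt }), but the cleanest repair is not replacing $T_k$ by $T_{k\pm1}$: since the rectangle $R_n$ is symmetric about its axis $OA_k$, the reflected triangle $OA_k(1, t_k - e_k)$ is an equally valid triangle approximant of $R_n$, lies on the same side as $\Delta_k$, and fits Lemma \ref{L : estimation II} with $B = A_k$, $C = A_{k+1}$ and exactly $e = e_k/|t_k - t_{k+1}| < \mu_0$ — so no appeal to $\tau_{\boldsymbol{t}}$ is needed at this step, and the disjointness property of Proposition \ref{ P : gpt } is used with the index as stated.
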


\begin{proof}
Fix $\alpha$ close to $1$ and  $n \in \mathbb{N}$ and consider a Perron tree of scale $(\alpha,n)$ $$ X := X_{\alpha,n}(\boldsymbol{t}) = \bigcup_{N+1 \leq  k \leq N + 2^n} \left( \Vec{s}_k + \Delta_k \right)$$ where $N$ is given by proposition \ref{ P : gpt }. Fix any $k \in \{N+1, \dots,N + 2^n\}$ and consider the pair of triangles  $$(\Vec{s}_k + \Delta_k,  \Vec{s}_k + T_k)$$ or more simply the pair $(\Delta_k,T_k)$ which is the same up to a translation. We can apply lemma \ref{L : estimation II} to this pair which yields the following inclusion $$ \left(\Vec{A}_{k+1} + \Vec{s}_k \right) + \frac{1}{2}\Delta_k \subset \left\{ M_{ \left\{ T_k\right\} } \mathbb{1}_{ \Vec{s}_k + \Delta_k } > \eta(\mu_0) \right\}.$$ Since we have $M_{T_k} \leq M_\mathcal{B}$ we also have $$\left(\Vec{A}_{k+1} + \Vec{s}_k \right) + \frac{1}{2}\Delta_k \subset \left\{ M_\mathcal{B} \mathbb{1}_{ \Vec{s}_k + \Delta_k } > \eta(\mu_0) \right\}.$$ The previous inclusion then yields $$\bigsqcup_{k = N}^{N + 2^{n}}   \left(\Vec{A}_{k+1} + \Vec{s}_k \right) + \frac{1}{2}\Delta_k \subset \left\{M_\mathcal{B}\mathbb{1}_X > \eta({\mu_0})  \right\}.$$ In the latter inclusion, the fact that the union is disjoint comes from our statement of Proposition \ref{ P : gpt }. Hence this gives in terms of Lebesgue measure $$\sum_{N +1 \leq  k  \leq N + 2^n} \frac{1}{4}|\Delta_k| \leq | \left\{M_\mathcal{B}\mathbb{1}_X > \eta({\mu_0})  \right\} |.$$ Using the fact that $X$ is a Perron tree constructed with the triangles $\Delta_k$ we have $$|X| \leq \left( \alpha^{2n} + \tau_{\boldsymbol{t}}(1-\alpha) \right) |\Delta_{N+1} \sqcup \dots \sqcup \Delta_{N+2^n}|.$$ In other words we have $$|X| \leq  4 \left( \alpha^{2n} + \tau_{\boldsymbol{t}}(1-\alpha) \right) |\left\{M_\mathcal{B}\mathbb{1}_X > \eta({\mu_0})  \right\} |.$$

\end{proof}

Observe finally that the claim implies that for any $p > 1$ we have  $$\|M_\mathcal{B}\|_p \geq \eta({\mu_0}) ( 4\alpha^{2n} + 4\tau_{\boldsymbol{t}}(1-\alpha) )^{-\frac{1}{p}} $$ for any $\alpha$ close to $1$ and any $n \in \mathbb{N}$. The fact that constant $\eta({\mu_0})$ is independant of the scale $(\alpha,n)$ concludes : we have $ \|M_\mathcal{B}\|_p = \infty$ for any $p > 1$ \textit{i.e.} $M_\mathcal{B}$ is a bad operator, and thus Theorem \ref{T : main 2} is proved.

\section{Proof of Theorem \ref{THMA}}

\begin{figure}[h!]
\centering
\includegraphics[scale=0.8]{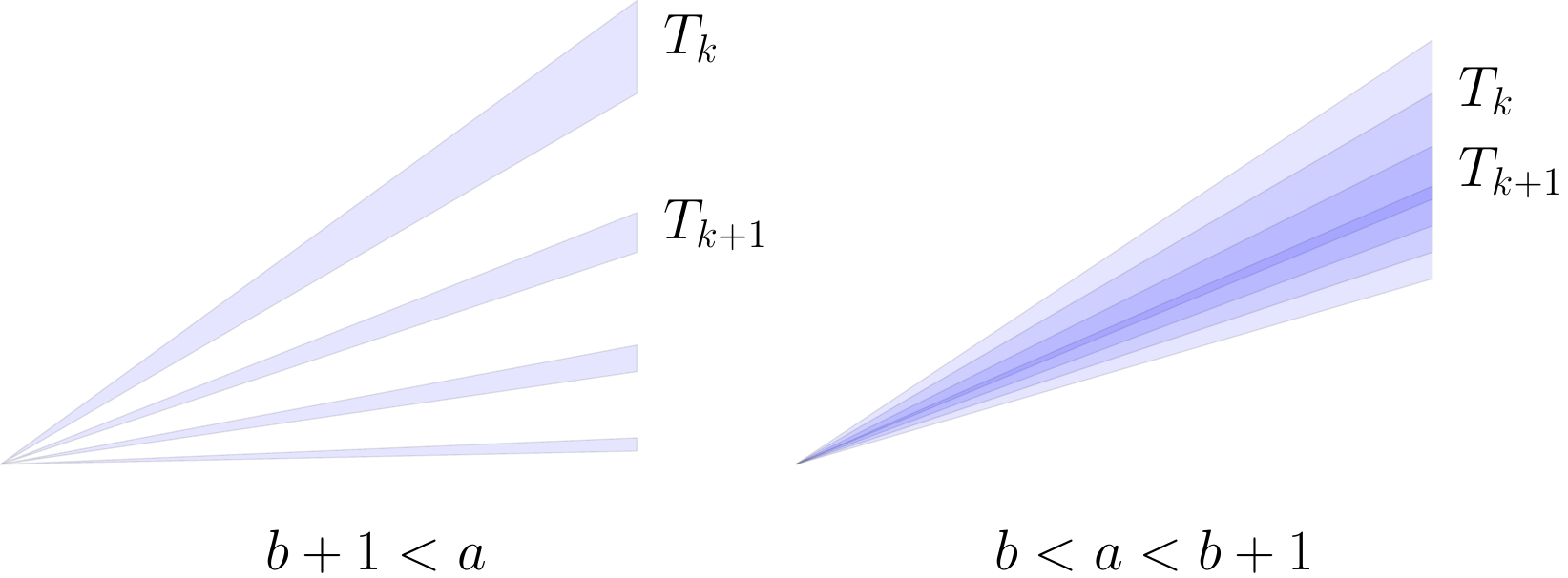}
\caption{On the left side a representation of the regime $a > b+1$. In this situation, the triangles $T_k$ do not overlap at all for large $k$ (actually the gap gets bigger with $k$). On the right side a representation of the regime $ b+1 > a > b$. In this situation, the triangles $T_k$ tend to completely overlap each other.}\label{FIG : strategy}
\end{figure}




We are now ready to prove Theorem \ref{THMA} ; we fix $a,b > 0$ and recall that we define the basis $\mathcal{B}_{a,b}$ as the one generated by a sequence of rectangles $\left\{ R_n \right\}_{n \geq 1}$ satisfying $$ \left(e_{R_n}, \omega_{R_n} \right) = \left( \frac{1}{n^a} , \frac{\pi}{4n^b} \right).$$ Recall also that for any $b > 0$ letting $\boldsymbol{\omega} = \{ \frac{\pi}{4n^b} \}$ we have $\tau_{\boldsymbol{\omega} } < \infty$.

\subsection*{Case $a \leq b$}

In the case $a \leq b$, observe that we have for $n \geq 1$ $$ \frac{4}{\pi n^b} \lesssim \frac{1}{n^a} $$ and so applying Theorem \ref{T : main 1} we obtain that $M_\mathcal{B}$ is a good operator.

\subsection*{Case $a \geq b+1 $}

In the case $a \geq b+1 $, observe that we have for $ n \geq 1$ $$ \left| \frac{\pi}{n^b} - \frac{\pi}{(n+1)^b} \right| \simeq \frac{1}{n^{b+1 }} $$ and so we have $$ \frac{1}{n^a} \lesssim  \left| \frac{\pi}{n^b} - \frac{\pi}{(n+1)^b} \right|.$$ We can apply Theorem \ref{T : main 2} which implies that $M_\mathcal{B}$ is a bad operator.

\subsection*{Case $b < a < b+1$}

Observe that for any $\ell \in \mathbb{N}^*$, we have $\mathcal{B}_{ \ell a , \ell b } \subset \mathcal{B}_{a,b}$. Hence we trivially have $$M_{\ell a, \ell b } \leq M_{a,b}.$$ Since $a > b$, for $\ell_0 \gg 1$ we have $a > b + \frac{1}{\ell_0}$ that is to say $$\ell_0 a > \ell_0 b +1.$$ Applying the previous case, it appears that $M_{\ell_0a, \ell_0b}$ is a bad operator and so is $M_{a,b}$.

{}

\end{document}